\definecolor{orange}{RGB}{255,127,0}
\theoremstyle{plain}
\newtheorem{theorem}{Theorem}[section]        
\newtheorem{lemma}[theorem]{Lemma}             
\newtheorem{proposition}[theorem]{Proposition} 
\theoremstyle{remark}
\newtheorem{example}[theorem]{Example}
\newtheorem{definition}[theorem]{Definition}   
\newtheorem{remark}[theorem]{Remark}        
\numberwithin{equation}{section}
\def\R{{\mathbb R}}
\def\C{{\mathbb C}}
\newcommand{\E}{{\mathbb E}}
\renewcommand{\P}{{\mathbb P}}
\newcommand{\F}{{\mathscr F}}
\renewcommand{\H}{E}
\renewcommand{\O}{\Omega}
\newcommand{\beq}{\begin{equation}}
\newcommand{\eeq}{\end{equation}}
\newcommand{\bal}{\begin{aligned}}
\newcommand{\eal}{\end{aligned}}
\newcommand{\ben}{\begin{enumerate}}
\newcommand{\een}{\end{enumerate}}
\newcommand{\bit}{\begin{itemize}}
\newcommand{\eit}{\end{itemize}}
\newcommand{\bth}{\begin{theorem}}
\renewcommand{\eth}{\end{theorem}}
\newcommand{\bpr}{\begin{proposition}}
\newcommand{\epr}{\end{proposition}}
\newcommand{\ble}{\begin{lemma}}
\newcommand{\ele}{\end{lemma}}
\newcommand{\bpf}{\begin{proof}}
\newcommand{\epf}{\end{proof}}
\newcommand{\bex}{\begin{example}}
\newcommand{\eex}{\end{example}}
\newcommand{\bre}{\begin{example}}
\newcommand{\ere}{\end{example}}
\renewcommand{\Im}{\hbox{\rm Im}\,}
\renewcommand{\Re}{\hbox{\rm Re}\,}
\newcommand{\calL}{{\mathscr L}}
\newcommand{\n}{\Vert}
\newcommand{\one}{{{\bf 1}}}
\newcommand{\embed}{\hookrightarrow}
\renewcommand{\a}{\alpha}
\renewcommand{\b}{\beta}
\newcommand{\Dom}{\mathsf{D}}
\newcommand{\fint}[0]{-\!\!\!\!\!\!\int}
\newcommand{\W}{W} 
\newcommand{\tmbp}{T_{2,\beta}^{p,2}}
\newcommand{\tmbpH}{T_{2,\beta}^{p,2}(H)}
\newcommand{\tmb}{T_{2,\beta}^{2,2}}
\newcommand{\tmbH}{T_{2,\beta}^{2,2}(H)}
\newcommand{\Tmb}{T_{2,\beta}^{1,2}}
\newcommand{\TmbH}{T_{2,\beta}^{1,2}(H)}
\newcommand{\tbp}{T_{2,\beta}^{p,2}}
\newcommand{\tbpH}{T_{2,\beta}^{p,2}(H)}
\newcommand{\tb}{T_{2,\beta}^{2,2}}
\newcommand{\tbH}{T_{2,\beta}^{2,2}(H)}
\title[Conical stochastic maximal $L^p$-regularity]
{Conical stochastic maximal $L^p$-regularity \\ for $1\leq p<\infty$}
\author{Pascal Auscher} 
\address{Universit\'e \ Paris-Sud, \ Laboratoire \  de \  Math\'ematiques, \ UMR \ 8628, \ F-91405 \ {\sc Orsay}; \ CNRS, F-91405 {\sc Orsay}.} 
\email{pascal.auscher@math.u-psud.fr}
\author{Jan van Neerven}
\address{Delft Institute of Applied Mathematics\\
Delft University of Technology\\P.O. Box 5031\\2600 GA Delft\\The Netherlands}
\email{J.M.A.M.vanNeerven@tudelft.nl}
\thanks{Jan van Neerven is supported by VICI subsidy 639.033.604 
of the Netherlands Organisation for Scientific Research (NWO)}
\author{Pierre Portal}
\address{Permanent Address:
Universit\'e Lille 1, Laboratoire Paul Painlev\'e, F-59655 {\sc Villeneuve d'Ascq}.
Current Address:
Australian National University, Mathematical Sciences Institute, John Dedman Building, 
Acton ACT 0200, Australia.}
\email{Pierre.Portal@anu.edu.au}
\keywords{Conical maximal $L^p$-regularity, stochastic convolutions, tent spaces, off-diagonal estimates}
\subjclass[2000]{Primary: 60H15; Secondary: 42B25, 42B35, 47D06}
\date\today
\begin{document}

\begin{abstract} Let $A = -{\rm div} \,a(\cdot) \nabla$ be a second order divergence form elliptic operator on $\R^n$ with bounded 
measurable real-valued coefficients and let $W$ be a cylindrical Brownian motion in a Hilbert space $H$.
Our main result implies
that the stochastic convolution process
$$ u(t) = \int_0^t e^{-(t-s)A}g(s)\,dW(s), \quad t\ge 0,$$
satisfies, for all $1\le p<\infty$,
a conical maximal $L^p$-regularity estimate
$$\E \n \nabla u \n_{ T_2^{p,2}(\R_+\times\R^n)}^p 
\le C_p^p \E \n g \n_{ T_2^{p,2}(\R_+\times\R^n;H)}^p.$$
Here, $T_2^{p,2}(\R_+\times\R^n)$ and $T_2^{p,2}(\R_+\times\R^n;H)$ are 
the parabolic tent spaces of real-valued and $H$-valued functions, respectively. 
This contrasts with Krylov's maximal $L^p$-regularity estimate  
$$\E \n \nabla u \n_{L^p(\R_+;L^2(\R^n;\R^n))}^p 
\le C^p \E \n g \n_{L^p(\R_+;L^2(\R^n;H))}^p$$
which is known to hold only for $2\le p<\infty$, even when $A = -\Delta$ and $H = \R$.

The proof is based on an $L^2$-estimate and extrapolation arguments which use the fact that 
$A$ satisfies suitable off-diagonal bounds. 
Our results are applied to obtain conical stochastic maximal $L^p$-regularity for 
a class of nonlinear SPDEs with rough initial data.
\end{abstract}

\maketitle

\section{Introduction}
Let us consider the following stochastic heat equation in $\R^n$
driven by a cylindrical Brownian motion $W$ with values in a (finite- or infinite-dimensional) Hilbert space $H$:
\begin{equation}\label{eq:SDE} \left\{
\begin{aligned}
\frac{\partial u}{\partial t}(t,x) & = \Delta u(t,x)+ g(t,x)\dot W(t), && t\ge 0, \ x\in \R^n, \\
u(0,x) & = 0, && x\in\R^n.
\end{aligned}
\right.
\end{equation}
Under suitable measurability and integrability conditions on the process 
$g:\R_+\times\R^n\times\Omega\to H$,
the process $u:\R_+\times\R^n\times\Omega\to \R$ given formally by 
the stochastic convolution 
\begin{align}\label{eq:stoch-convol} 
u(t) = \int_0^t e^{(t-s)\Delta} g(s)\,dW(s), 
\quad t\ge 0,
\end{align}
is well defined. This process is usually called the mild solution of \eqref{eq:SDE},
and it has stochastic maximal $L^2$-regularity in the sense that
$$\E\n \nabla u \n_{ L^2(\R_+; L^2(\R^n;\R^n))}^2 \le C^2\E\n g\n _{L^2(\R_+; L^2(\R^n;H))}^2$$
with a constant $C$ independent of $g$ and $H$.
This follows from a classical result of Da Prato \cite{DP} (see \cite{DPZ}
for further results along these lines). It was 
subsequently shown by Krylov \cite{Kry94, Kry06} that, 
for $p\ge 2$, $u$ has stochastic maximal $L^p$-regularity 
in the sense that 
$$\E\n \nabla u \n_{ L^p(\R_+; L^2(\R^n;\R^n))}^p \le C_p^p \E\n g\n _{L^p(\R_+; L^2(\R^n;H))}^p.$$
Krylov actually proves that
$L^2(\R^n;H)$ may be replaced by $L^q(\R^n;H)$ for any $2\le q\le p$
and that $\Delta$ may be replaced by any second-order uniformly elliptic operator 
under mild regularity assumptions on the coefficients.
For $p>2$, the condition $q\le p$ was removed in \cite{NVW11}
and the result was extended to arbitrary operators having a bounded $H^\infty$-calculus on $L^q(X,\mu)$,
where $q\ge 2$ and $(X,\mu)$ is an arbitrary $\sigma$-finite 
measure space. 

The condition $p\ge 2$ in all these results is necessary, 
in the sense that the corresponding result for 
$1\le p<2$ is false  even for $H=\R$ \cite{Kry}.
The aim of this paper is to show that the stochastic heat equation \eqref{eq:SDE} 
does have `conical' stochastic maximal $L^p$-regularity in 
the full range of $1\leq p<\infty$, provided the condition 
$g\in L^p(\R_+\times\Omega; L^2(\R^n;H))$ is replaced by the condition $g\in 
L^p(\Omega; T_2^{p,2}(\R_+\times \R^n, t^{-\beta}dt\times dx;H))$. Here $T_2^{p,2}(\R_+\times \R^n, 
t^{-\beta}dt\times dx;H)$ is a
weighted parabolic tent space of $H$-valued functions on $\R_+\times \R^n$ 
(the definition is stated in Section \ref{sec:main}; for $\beta=1$
the classical parabolic tent space $T_2^{p,2}(\R_+\times \R^n;H)$ is obtained). 
Our main result, stated somewhat informally (see Theorem \ref{thm:main} for the precise formulation),
reads as follows.

\begin{theorem}\label{thm:stoch-heat}
Let $A = -{\rm div}\,a(\cdot)\nabla$ be a divergence form elliptic operator on $\R^n$ 
with bounded measurable real-valued coefficients. 
Then for all $1\leq p<\infty$ and $\beta>0$  
the stochastic convolution process
$$ u(t) = \int_0^t e^{-(t-s)A}g(s)\,dW(s), \quad t\ge 0,$$
satisfies the conical stochastic maximal $L^p$-regularity estimate
 $$\E \n \nabla u \n_{ T_2^{p,2}(\R_+\times \R^n, t^{-\beta}dt\times dx;\R^n)}^p 
\le C_{p,\beta}^p \E \n g \n_{ T_2^{p,2}(\R_+\times \R^n, t^{-\beta}dt\times dx;H)}^p.$$
\end{theorem}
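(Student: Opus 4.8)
The plan is to reduce the conical maximal $L^p$-regularity estimate to an $L^2$-estimate together with off-diagonal bounds for the semigroup, using the tent-space extrapolation machinery. First I would establish the case $p=2$ directly. Here $T_2^{2,2}(\R_+\times\R^n, t^{-\beta}dt\times dx;H)$ coincides, up to the weight, with $L^2(\R_+\times\R^n, t^{-\beta}dt\times dx;H)$ by Fubini and the standard identification of tent spaces with $L^2$ when $p=2$; so the claim for $p=2$, $\beta=1$ is essentially Da Prato's stochastic maximal $L^2$-regularity, and for general $\beta$ one needs a weighted $L^2$-variant. The natural tool is the Itô isometry in the Hilbert space $\gg$ of $\gamma$-radonifying operators combined with square function estimates: one writes $\E\n\nabla u\n^2$ as a double integral and uses the boundedness on $L^2$ of the operator $g\mapsto (t\mapsto \int_0^t \nabla e^{-(t-s)A}g(s)\,ds)$ obtained from the fact that $\nabla A^{-1/2}$ is bounded (De Giorgi--Nash--Moser / Gaussian bounds are not needed for this step, only the analyticity of the semigroup and the boundedness of the Riesz transform $\nabla A^{-1/2}$ on $L^2$, which is automatic).

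Next I would set up the extrapolation. The key structural fact is that $A$, being a real divergence-form operator, satisfies $L^2$ off-diagonal (Gaffney--Davies) estimates: $\n \one_E e^{-tA}\one_F\n_{L^2\to L^2}\lesssim e^{-cd(E,F)^2/t}$ for Borel sets $E,F$, and likewise for $\sqrt t\,\nabla e^{-tA}$ and for the time-derivative $tAe^{-tA}$. These are the hypotheses under which conical (tent-space) extrapolation theorems operate. I would invoke the tent-space extrapolation result — either the deterministic conical maximal regularity theory of Auscher--Kriegler--Monniaux--Portal type, adapted here with a stochastic (vector-valued, $\gamma$-radonifying) target — to pass from the $L^2$-bound on the stochastic convolution operator to a $T_2^{p,2}$-bound for all $1\le p<\infty$. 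Concretely, one realizes the stochastic convolution as acting on $\gg$-valued tent spaces; the singular-integral-type kernel $(t,s)\mapsto \nabla e^{-(t-s)A}\one_{s<t}$ satisfies the required parabolic off-diagonal decay in the space variable (scaling $|x-y|^2\sim t-s$), which is exactly what makes the atomic/molecular decomposition of $T_2^{1,2}$ atoms map to (a fixed multiple of) molecules, giving the endpoint $p=1$; interpolation with $p=2$ and a duality/extrapolation argument upward then covers $2<p<\infty$.

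The role of the weight $t^{-\beta}dt$ is handled by a change of variables and the observation that the parabolic scaling $(t,x)\mapsto(\lambda^2 t,\lambda x)$ interacts with $t^{-\beta}dt\times dx$ in a controlled homogeneous way, so the weighted estimate follows from the unweighted kernel bounds by the same extrapolation, with the constant $C_{p,\beta}$ absorbing the $\beta$-dependence; one must only check that the off-diagonal decay is strong enough to beat the polynomial weight near $t=0$ and near $t=\infty$, which it is since Gaussian decay dominates any power. I would also need the stochastic Fubini theorem and the $\gamma$-norm characterization of tent spaces (identifying $\E\n F\n_{T_2^{p,2}(H)}^p$ with $\n F\n_{L^p(\Omega;T_2^{p,2}(\gg))}^p$-type quantities via the Burkholder--Davis--Gundy / Itô isomorphism in tent spaces) to even give meaning to the stochastic convolution inside the tent space; this identification, presumably established in the body of the paper as the precise Theorem~\ref{thm:main}, is what converts the stochastic estimate into a deterministic operator-theoretic one to which extrapolation applies.

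The main obstacle I expect is the endpoint $p=1$: proving that the stochastic convolution maps $T_2^{1,2}$-atoms to molecules with uniform control, because the stochastic integral does not commute naïvely with the localization that defines atoms, so one has to combine the off-diagonal estimates with a square-function / $\gamma$-radonifying argument simultaneously in the $(t,x)$ and the probability variables — essentially a stochastic analogue of the Calderón--Zygmund argument on tent spaces. Closely related is ensuring the vector-valued ($\gg$-valued, not just $H$-valued) nature of the target does not break the extrapolation: one needs that $\gg$, being a Hilbert space, permits the $L^2$-square-function estimates to go through unchanged, which it does, but the bookkeeping of the two "$L^2$ directions" (the $s$-integration inside $\gg$ and the $(t,x)$ tent-space integration) is where the technical weight of the proof lies.
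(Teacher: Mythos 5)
Your overall architecture matches the paper's: a weighted $L^2$ estimate obtained from Da Prato's result and the It\^o isometry, the It\^o isomorphism/square-function reduction that turns the stochastic bound into a deterministic tent-space bound for the operator $g\mapsto \one_{\{s\le t\}}T_{t-s}g(s,\cdot)$ with values in $L^2(\R_+;H)$, and then off-diagonal-bound extrapolation with atoms at $p=1$ and interpolation. But there is a genuine gap at the point which is the actual novelty of the theorem, namely the full range $1\le p<\infty$ for every $\beta>0$. You only invoke $L^2$-$L^2$ (Gaffney--Davies) off-diagonal bounds for $e^{-tA}$, $t^{1/2}\nabla e^{-tA}$ and $tAe^{-tA}$, and assert that this decay ``is exactly what makes'' the atom-to-molecule argument work at $p=1$. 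It is not: with $L^2$-$L^2$ bounds alone, the annular decomposition yields the restriction $p>\frac{2n}{n+2\beta}$ (Proposition \ref{prop:abstract}; its deterministic ancestor in \cite{AMP} has the analogous restriction), because in the regime where the time variable is comparable to, or larger than, the squared radius of the atom's Carleson box there is no spatial separation to exploit, and the Gaussian factor gives nothing; the only source of decay there is the smoothing factor $t^{-n/2}$ coming from an $L^1$-$L^2$ (or $L^q$-$L^2$, $q<2$) bound applied to the atom, paired with H\"older on the small ball supporting the atom. Concretely, in the paper's estimates on the regions $C_j$ and $C'_j$ the bound $\n t^{1/2}T_{t-s}[\one_{B(x_0,r)}a(s,\cdot)]\n_{L^2}\lesssim t^{-n/2}e^{-c4^jr^2/t}\n a(s,\cdot)\n_{L^1}$ produces the factor $4^{-j(\beta+n/2)}$ that beats the $(2^jr)^{n/2}$ loss from Lemma \ref{lem:Tp2}; with only $L^2$-$L^2$ bounds the resulting series behaves like $\sum_j 2^{j(\frac n2-\beta)}$ and diverges for $\beta<\frac n2$. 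So for small $\beta$ your argument cannot reach $p$ near $1$. The paper closes this by using the $L^1$-$L^2$ off-diagonal bounds for $(t^{1/2}\nabla e^{-tA})_{t>0}$, which are a special feature of real-coefficient divergence-form operators (via the Aronson/Auscher--Tchamitchian kernel estimates) and can fail for complex coefficients -- precisely the ``Gaussian-bound'' input you declared unnecessary.

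Two secondary points. First, the inequality $\n\nabla u\n_{L^2}\lesssim\n A^{1/2}u\n_{L^2}$ is not ``automatic'' for non-symmetric real $a$: it is the solution of Kato's square root problem \cite{ahlmt}, which the paper cites explicitly to pass from the $A^{1/2}$-estimate of Proposition \ref{prop:beta} to the gradient estimate. Second, the obstacle you single out -- that the stochastic integral does not commute with the atomic localization -- does not arise in the paper's scheme: the It\^o isomorphism plus the square-function estimate reduce everything, before any decomposition, to the deterministic boundedness of $Mf(t,x)=\one_{\{t/2\ge s\}}[T_{t-s}f(s,\cdot)](x)$ from $\tmbpH$ into $\tmbp(L^2(\R_+;H))$; the stochastic hypothesis re-enters only once, in the near-diagonal term $J_0$, through the assumed weighted $T^{2,2}$ bound. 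Likewise no duality argument is needed for $p>2$: the direct $L^2$-$L^2$ annular estimate of Proposition \ref{prop:abstract} already covers all $p\ge 2$ since $\frac{2n}{n+2\beta}<2$ for every $\beta>0$.
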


The precise assumptions on $A$ are stated in Example \ref{ex:divform} below.
The proof of Theorem \ref{thm:stoch-heat} proceeds in two steps. First, a $T_2^{2,2}$-estimate is 
deduced from the It\^o isometry (Section \ref{sec:T22}). Using off-diagonal bound techniques, this estimate
is then extrapolated to a $T_2^{p,2}$-estimate (Section \ref{sec:extrapolation}). 

The results are applied to prove conical maximal $L^p$-regularity 
for a class of stochastic partial 
differential equations on $\R^n$ driven by space-time white noise (Section \ref{sec:SPDE}). 
We shall prove that if 
$b:\R^n\to \R$ satisfies appropriate Lipschitz and growth assumptions and $A$ is as in Theorem \ref{thm:stoch-heat}, 
then the mild solution of the stochastic PDE
\begin{equation*} \left\{
\begin{aligned}
\frac{\partial u}{\partial t}(t,x) + A u(t,x) & =  b(\nabla u(t,x))\dot W(t), && t\ge 0, \ x\in \R^n, \\
u(0,x) & = u_0(x), && x\in\R^n,
\end{aligned}
\right.
\end{equation*}
has conical stochastic maximal $L^p$-regularity for all $1<p<\infty$, in the sense that 
$\nabla u\in T_2^{p,2}(\R_+\times \R^n,t^{-\beta}dt\times dx)$ for all 
$0<\beta <1$ 
and all initial values $u_0\in \Dom_p(A^{\frac{\beta}{2}})$,
the domain of the $L^p$-realisation of 
$A^{\frac{\beta}{2}}$. Note that 
the weight $t^{-\beta}$ allows the handling
of initial values in $\Dom_p(A^\theta)$ with $\theta>0$ arbitrarily small. 
It is only the stochastic part that forces us to take 
$\beta>0$, and it seems that our technique does not work when $\beta=0$.

The present paper, as well as \cite{AKMP} which contains more elaborate 
developments not needed here, builds upon techniques developed in \cite{AMP}.
There, similar off-diagonal bound techniques are applied
to obtain conical maximal $L^p$-regularity for a class of deterministic initial value problems.
The key feature of both papers is that they depart from the traditional paradigm in the theory of evolution 
equations where a solution is a trajectory, indexed by time, in a suitably chosen state space.
This could be called the `Newtonian' paradigm, in which time and space are treated as separate entities.
In the conical approach, space and time are inextricably mixed into one `space-time'.

The idea of using tent space maximal regularity in PDEs goes back, as far as we know, to Koch and Tataru \cite{kt}, 
who proved $T^{\infty,2}$-regularity of solutions of Navier-Stokes equations with rough initial data (see also \cite{kl}).
The underlying ideas come from the theory of Hardy spaces and its application to boundary value problems (see, e.g. \cite{dk}). 
To the best of our knowledge, the present paper is the first to consider a tent space approach for stochastic PDEs. 

The notations in this paper are standard. For unexplained terminology we refer to 
\cite{NVW11b, nvw, nw} (concerning cylindrical Brownian motions and vector-valued stochastic integration) and
\cite{Stein} (concerning tent spaces). We use the convention $\R_+ = (0,\infty)$. We work over the real scalar field.

\section{Preliminaries}\label{sec:main}

\subsection{Off-diagonal bounds}
Our results rely on off-diagonal bound techniques. A family $(T_t)_{t>0}$ of bounded linear operators
on $L^2(\R^n)$ 
is said to satisfy {\em $L^q$-$L^2$ off-diagonal bounds}   
if there exist constants $c>0$ and $C\ge 0$ such that for all Borel sets 
$E$, $F$ in $\R^n$ and all $f\in L^2\cap L^q(\R^n)$ we have $$ \n \one_E T_t \one_F f\n_{L^2(\R^n)} \le C t^{-\frac{n}{2}(\frac{1}{q}-\frac{1}{2})}
\exp(-c(d(E,F))^2/t)  \n \one_F f\n_{L^q(\R^n)},$$  
with $d(E,F) := \inf\{|x-y|: \ x\in E, \ y\in F\}$.

\medskip

Such bounds are substitutes for the classical pointwise kernel estimates of Calder\-\'on-Zygmund theory, which are 
not available when one deals with semigroups generated by elliptic operators with rough coefficients. 
Following the breakthrough paper \cite{bk}, they have recently become a highly popular tool in harmonic analysis.
Typical examples of their use are given in the memoir \cite{A}.  Note that 
$L^2$-$L^2$ off-diagonal bounds imply uniform boundedness in $L^{2}$ (taking $E=F=\R^{n}$).
Observe that off-diagonal bounds form an ordered scale of conditions. 

 \begin{lemma}\label{lem:comparisonoff} Let $1\le q\le r\le 2$, and  $(T_{t})_{t>0}$ be a family of bounded linear operators on $L^2$, which satisfies $L^q$-$L^2$ off-diagonal bounds. 
Then $(T_{t})_{t>0}$ satisfies $L^r$-$L^2$ off-diagonal bounds.
\end{lemma}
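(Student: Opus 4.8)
The plan is to interpolate the given $L^q$-$L^2$ off-diagonal bound with the trivial $L^2$-$L^2$ bound that $L^q$-$L^2$ off-diagonal bounds automatically carry with them. First I would record the two endpoint estimates. On the one hand, since $(T_t)_{t>0}$ satisfies $L^q$-$L^2$ off-diagonal bounds, for all Borel sets $E,F\subset\R^n$ and all $f$ in the relevant intersection space we have
$$\n \one_E T_t \one_F f\n_{L^2} \le C t^{-\frac{n}{2}(\frac1q-\frac12)}\exp(-c\, d(E,F)^2/t)\,\n \one_F f\n_{L^q}.$$
On the other hand, as remarked in the excerpt, such bounds imply uniform $L^2$-boundedness of $T_t$, and moreover one also gets the $L^2$-$L^2$ off-diagonal bound $\n \one_E T_t \one_F f\n_{L^2}\le C'\exp(-c'\,d(E,F)^2/t)\,\n\one_F f\n_{L^2}$ with the same Gaussian-type decay (this is a standard consequence; at worst one weakens $c$ slightly, which is harmless). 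I would fix $E,F,t$ throughout and view $\one_E T_t \one_F$ as a single linear operator $S=S_{E,F,t}$, so that the two estimates say $S\colon L^q\to L^2$ with norm $\le C t^{-\frac n2(\frac1q-\frac12)}e^{-c\,d^2/t}$ and $S\colon L^2\to L^2$ with norm $\le C' e^{-c'\,d^2/t}$, where $d=d(E,F)$.

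Next I would apply the Riesz--Thoren interpolation theorem to the single operator $S$ between the endpoints $L^q$ and $L^2$. Given $q\le r\le 2$, choose $\theta\in[0,1]$ with $\frac1r=\frac{1-\theta}{q}+\frac{\theta}{2}$, equivalently the natural $\theta$ determined by $r$. Interpolation then yields
$$\n S\n_{L^r\to L^2}\le \bigl(C t^{-\frac n2(\frac1q-\frac12)}e^{-c\,d^2/t}\bigr)^{1-\theta}\bigl(C' e^{-c'\,d^2/t}\bigr)^{\theta}.$$
A short computation with the definition of $\theta$ shows $(1-\theta)\bigl(\frac1q-\frac12\bigr)=\frac1r-\frac12$, so the power of $t$ comes out exactly as $t^{-\frac n2(\frac1r-\frac12)}$, which is the exponent required by the definition of $L^r$-$L^2$ off-diagonal bounds. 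The Gaussian factors combine to $\exp(-(c(1-\theta)+c'\theta)\,d^2/t)$, and setting $\tilde c:=\min(c,c')>0$ and $\tilde C:=\max(C,C')$ gives a clean bound of the correct form with constants independent of $E,F,t$. Undoing the abbreviation $S=\one_E T_t\one_F$ yields precisely the $L^r$-$L^2$ off-diagonal bound for $(T_t)_{t>0}$.

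The only genuinely delicate point is the interpolation step itself, since Riesz--Thoren requires the operator to be defined and bounded on a common dense domain of simple (or $L^q\cap L^2$) functions with compatible values, and one must check the two endpoint bounds hold with the \emph{same} operator $S$ for a \emph{fixed} pair $(E,F)$ and fixed $t$ before interpolating — it is the uniformity of the resulting constants in $E,F,t$ that matters and that is automatic here because the interpolation constant in Riesz--Thoren is $1$. I expect everything else (producing the $L^2$-$L^2$ off-diagonal bound from the hypothesis, and the arithmetic identifying the exponents) to be routine, so I would state those briefly and spend what little care is needed on phrasing the interpolation correctly.
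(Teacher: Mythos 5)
There is a genuine gap, and it sits exactly where you flag the argument as "standard": the $L^2$-$L^2$ endpoint of your interpolation is not available from the hypothesis. The remark in the paper goes in the opposite direction (it says that $L^2$-$L^2$ off-diagonal bounds imply uniform $L^2$-boundedness, by taking $E=F=\R^n$); it does not say that $L^q$-$L^2$ off-diagonal bounds imply $L^2$-$L^2$ ones. The standing assumption only gives that each $T_t$ is bounded on $L^2$, with no uniformity in $t$ and no off-diagonal decay in the $L^2\to L^2$ sense, and taking $E=F=\R^n$ in the hypothesis only yields an $L^q\to L^2$ bound. In fact, the implication "$L^q$-$L^2$ off-diagonal bounds $\Rightarrow$ $L^2$-$L^2$ off-diagonal bounds" is precisely the case $r=2$ of the lemma you are trying to prove, so assuming it makes the argument circular. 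It is true, but proving it requires the real work: one must localise, e.g.\ decompose $E$ and $F$ into balls or cubes of radius comparable to $t^{1/2}$, apply H\"older's inequality on each piece (this is where the factor $t^{-\frac n2(\frac1q-\frac1r)}$ is absorbed by $|B|^{\frac1q-\frac1r}\sim t^{\frac n2(\frac1q-\frac1r)}$), and then resum with a Schur-type estimate using the Gaussian factors, being careful that the number of pieces at a given distance from $E$ can be infinite. This localisation is exactly what the paper outsources: it cites the equivalence (on $\R^n$) of the global off-diagonal bounds with off-diagonal bounds on balls \cite[Proposition 3.2]{am}, after which the comparison $q\le r$ is a one-line application of H\"older's inequality in the ball-normalised formulation.

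Your interpolation step itself is sound: for fixed $E,F,t$, Riesz--Thorin applied to $S=\one_E T_t\one_F$ between $L^q\to L^2$ and $L^2\to L^2$ does produce the exponent $t^{-\frac n2(\frac1r-\frac12)}$ and a Gaussian factor with constant $\min(c,c')$, uniformly in $E,F,t$. But once you have honestly established the $L^2$-$L^2$ off-diagonal endpoint by the localisation argument above, that same argument gives the $L^r$-$L^2$ bound for every $r\in[q,2]$ directly, so the interpolation becomes superfluous. Either supply the localisation argument (or the citation the paper uses) in place of the unproved endpoint, or drop the interpolation and run the ball decomposition with H\"older at exponent $r$ from the start.
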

\begin{proof}
This is a consequence of \cite[Proposition 3.2]{am}, where it is proven that such off-diagonal bounds are equivalent, on $\R^{n}$, to off-diagonal bounds on balls (see \cite[Definition 2.1]{am}). 
The result for the latter follows from H\"older's inequality.
\end{proof}

\begin{example}[Divergence form elliptic operators]
\label{ex:divform}
We mostly consider second order operators in divergence form 
$A= -{\rm div}\, a \nabla$, with $a \in L^{\infty}(\R^{n};M^n(\R))$ elliptic in the sense that there exist
 $C,C'>0$ such that for all $x\in\R^n$ and $\xi,\xi' \in \R^{n}$ we have 
$$
 a(x)\xi\cdot \xi \ge C|\xi|^{2} \quad  \text{and} \quad |a(x)\xi\cdot\xi'|\leq C' |\xi||\xi'|.
$$ 
It is proven in \cite[subsection 4.3]{A} that $(t^{\frac12}\nabla e^{-tA})_{t \geq 0}$ satisfies 
$L^{q}$-$L^{2}$ off-diagonal bounds for all $q \in (1,2]$.
In fact, $(t^{\frac12}\nabla e^{-tA})_{t \geq 0}$ even satisfies $L^{1}$-$L^{2}$ off-diagonal bounds, as can be seen
in \cite[page 51]{at} as a consequence of \cite[Theorem 4 and Lemma 20]{at}.
We use these $L^{1}$-$L^{2}$ bounds in the results below. 
If we assume only $L^{q}$-$L^{2}$ bounds for some $q\in (1,2]$, Theorem \ref{thm:abstract-q} 
still holds for all $p\in [1,2] \cap (\frac{2n}{n+\beta q'},2]$  (where $\frac1q+\frac1{q'}=1$),   
but the proof is technical (see \cite{AKMP}). This version suffices for proving 
Theorem \ref{thm:stoch-heat}.
 
Note that we assume that $a$ has real-valued coefficients.
In the stochastic setting, where the noise process $W$ is also real-valued, this is a natural
assumption. 
\end{example}

\subsection{Conical maximal $L^p$-regularity}
\label{subsec:conical}
The notion of maximal $L^p$-regularity has played an important role in
much of the recent progress in the theory of nonlinear parabolic evolution 
equations. We refer to the lecture notes of Kunstmann and Weis \cite{KW}
for an overview and references to the rapidly expanding literature on this
topic. 

Motivated by applications to 
boundary value problems with $L^2$-data,
Auscher and Axelsson \cite{AA-Inventiones,AA} proved that for a bounded 
analytic $C_0$-semigroups $S = (S(t))_{t\ge 0}$ with generator $-A$
on a Hilbert space $\H$, the classical maximal $L^2$-regularity estimate
\begin{align}\label{eq:MR} \n AS * g\n_{L^2(\R_+;\H)}
\le C\n g\n_{L^2(\R_+;\H)}
\end{align}
implies, for any $\beta\in (-1,\infty)$, the weighted 
maximal $L^2$-regularity estimate 
\begin{align}\label{eq:MR-beta}\n AS * g\n_{L^2(\R_+, t^{-\beta}dt; \H)}
\le C_\beta\n g\n_{L^2(\R_+, t^{-\beta}dt;\H)}.
\end{align}
Here,
$$ S*g(t) = \int_0^t S(t-s)g(s)\,ds$$
denotes the convolution of $g$ with the semigroup $S$ and $AS * g := A(S*g)$. 
See also \cite{ps} for similar weighted maximal regularity estimates in $L^{p}$ spaces. 
 
With the aim of eventually extending the results of \cite{AA-Inventiones} to 
an $L^p$-setting, a `conical' $L^p$-version of  \eqref{eq:MR-beta}
was subsequently obtained in \cite{AMP}. 
Observing that, for $\H = L^2(\R^n)$, one has
\begin{equation}\label{eq:T22} \n g\n_{L^{2}(\R_+,t^{-\beta}dt;L^2(\R^n))}
=\Big( \int  _{\R^{n}} 
\Big(\int  _{0} ^{\infty}\!\! \fint_{B(x,t^\frac12)} | g(t,y)|^{2} \, dy\,\frac{dt}{t^{\beta}} \Big) \,dx \Big)^{\frac{1}{2}},
\end{equation}
where the dashed integral denotes the average over the ball 
$B(x,t^\frac12) = \{y\in \R^n: |x-y|<t^\frac12\}$.
One defines,
for $1\le p<\infty$,
$$
\|g\|_{T_2^{p,2}(\R_+\times\R^n, t^{-\beta}dt\times dy)} := \Big( \int  _{\R^{n}} 
\Big(\int  _{0} ^{\infty}\!\! \fint _{B(x,t^\frac12)}| g(t,y)|^{2} 
\,dy\,\frac{dt}{t^{\beta}} \Big)^{\frac{p}{2}} \,dx \Big)^{\frac{1}{p}}.
$$
The Banach space 
$$\tmbp  := T_2^{p,2}(\R_+\times\R^n;t^{-\beta}dt\times dy)$$ 
consisting of all measurable
functions $g:\R_+\times\R^n\to \R$ 
for which this norm is finite is called the {\em tent space} of exponent $p$ and weight $\beta$.
The spaces $\tmbp $ are weighted, parabolic versions of the spaces $T^{p,2}$ introduced by Coifman, Meyer and Stein \cite{CMS}, and have been\
studied by many authors. We refer to \cite{Stein} for a thorough discussion and
references to the literature. It is useful to observe that 
$\tmbp $ can be identified with a closed 
subspace of  $L^{p}(\R^n;L^{2}(t^{-\frac{n}{2}-\beta}dt\times dy))$ 
for $1<p<\infty$ and of the Hardy space 
$H^{1}(\R^n;L^{2}(t^{-\frac{n}{2}-\beta} dt\times dy))$ for $p=1$
(see \cite{HTV}).

\medskip
\noindent{\em Notation.} \ 
From now on, whenever functions belong to a (vector-valued) 
Lebesgue space over $\R^n$, we shall suppress $\R^n$ 
from our notations. For instance, we shall write 
$$L^2 := L^2(\R^n), \quad L^2(H) = L^2(\R^n;H)$$ 
and thus use the notation $L^2(\R^n)$ as an abbreviation for $L^2(\R^n;\R^n)$.
Likewise we suppress $\R_+\times \R^n$ from the notations for (vector-valued) tent spaces. In all other instances
we shall be notationally more explicit.

\medskip
The next estimate is the main result of \cite{AMP}.

\begin{theorem}[Conical maximal $L^p$-regularity]\label{thm:conicalmaxreg} Let $-A$ be the generator of a bounded
analytic $C_0$-semigroup $S = (S(t))_{t \geq 0}$ on $L^2$,
and suppose that the family $(tAS(t))_{t \geq 0}$ 
satisfies $L^2$-$L^2$ off-diagonal bounds.
Then for all $\beta>-1$, $p> \sup \big(\frac{2n}{n+ 2(1+\beta)}, 1\big)$,  and 
$g\in L^2(t^{-\b}\,dt;\Dom(A))\cap \tmbp$
one has
$$ \n AS*g\n_{\tmbp } \le C_{p,\beta} \n g\n_{\tmbp },$$
with constant $C_{p,\beta}$ independent of $g$.
\end{theorem}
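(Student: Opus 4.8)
The plan is to settle $p=2$ first --- where the estimate is essentially classical --- and then to reach all admissible $p$ by an off-diagonal extrapolation, the mechanism being a pointwise domination of the tent-space square function of $AS*g$ by a maximal operator whose $L^p$-bounds reproduce exactly the stated range of exponents. For $p=2$ itself, I would use the identity \eqref{eq:T22}, by which $\tmb$ coincides isometrically with $L^2(\R_+,t^{-\b}dt;L^2)$; the assertion then becomes precisely the weighted maximal $L^2$-regularity estimate \eqref{eq:MR-beta} with Hilbert space $L^2$. By the theorem of Auscher and Axelsson recalled above, \eqref{eq:MR-beta} follows from the unweighted estimate \eqref{eq:MR}, and \eqref{eq:MR} holds on any Hilbert space for any bounded analytic $C_0$-semigroup (de Simon's theorem; on the Fourier side it reduces to the uniform resolvent bound $\sup_{\xi\neq0}\n A(i\xi+A)^{-1}\n<\infty$ characterising such generators). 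This gives boundedness of $AS*$ on $\tmb$ with a constant depending only on the analyticity constants of $S$.

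\textbf{Extrapolation to $p\neq 2$.} Writing $\mathcal A_\b g(x):=\big(\int_0^\infty\fint_{B(x,t^{1/2})}|g(t,y)|^2\,dy\,\tfrac{dt}{t^\b}\big)^{1/2}$, so that $\n g\n_{\tmbp}=\n\mathcal A_\b g\n_{L^p}$, I would aim for a pointwise bound of the shape
\[ \mathcal A_\b(AS*g)(x)\ \lesssim\ \big(\mathcal M\big[(\mathcal A_\b g)^{s}\big](x)\big)^{1/s},\qquad \tfrac1s=\tfrac12+\tfrac{1+\b}{n}, \]
with $\mathcal M$ the Hardy--Littlewood maximal operator on $\R^n$ and $s=\tfrac{2n}{n+2(1+\b)}<2$. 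Granting this, the right-hand side is bounded on $L^p$ whenever $p>s$; combined with the restriction $p>1$ (which keeps us in the Banach range of the tent-space scale) this yields boundedness of $AS*$ on $\tmbp$ for all $p>\sup\big(\tfrac{2n}{n+2(1+\b)},1\big)$ simultaneously --- with no separate treatment of $p<2$ and $p>2$ and no appeal to duality.

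\textbf{Proving the pointwise bound.} To get it I would fix $x$ and, at each dyadic spatial scale $r$, split the integral defining $AS*g$ according to the distance in $\R^n$ from $x$. The piece of $g$ supported over the ball $B(x,Cr)$ would be controlled by the $\tmb$-estimate of the previous step, after which a parabolic Poincar\'e--Sobolev inequality turns the resulting weighted $L^2$-quantity into an $L^s$-average of $\mathcal A_\b g$ over a dilate of $B(x,Cr)$ --- this is the step that improves the exponent from $2$ down to $s$ and is where the weight $\b$ enters the threshold. The pieces of $g$ living at distance $\sim 2^kr$ from $x$, $k\ge1$, would be handled with the $L^2$-$L^2$ off-diagonal bounds for $(tAS(t))_{t>0}$, which produce a Gaussian factor $\exp(-c\,4^kr^2/(t-s))$ against the kernel $\sim(t-s)^{-1}$ coming from analyticity; the resulting geometric series in $k$ sums to the same maximal-function bound. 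Reorganised, the same estimates say that $AS*$ sends a $\tmbp$-atom supported over a parabolic Carleson box $\widehat Q$ (normalised in $\tmb$) to a fixed multiple of a $\tmbp$-molecule, so one could alternatively finish through the atomic decomposition of tent spaces together with the uniform $\tmbp$-bound for molecules.

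\textbf{Main obstacle.} The hard part will be the bookkeeping behind the pointwise estimate above: keeping precise track of how the parabolic scaling $t\sim r^2$, the weight $t^{-\b}$ (the reason the exponent involves $1+\b$ rather than $1$ or $\b$), the extra factor $t^{-n/2}$ hidden in the ball average, and the Gaussian off-diagonal decay combine, and checking that the Poincar\'e--Sobolev step is both available and sharp --- it is exactly this balance, using nothing of analyticity beyond $\n AS(t)\n\lesssim t^{-1}$ together with the off-diagonal bounds, that pins down $s=\tfrac{2n}{n+2(1+\b)}$. The base case $p=2$ and the concluding maximal-operator step should be comparatively soft.
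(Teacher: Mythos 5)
Your base case $p=2$ is fine and is exactly the paper's route: the identification \eqref{eq:T22} turns the claim into the weighted maximal $L^2$-regularity estimate \eqref{eq:MR-beta}, which Auscher--Axelsson derive from de Simon's estimate \eqref{eq:MR}. The extrapolation step, however, has a genuine gap. The argument the paper relies on (it quotes the theorem from \cite{AMP}; the same scheme is carried out in detail for the stochastic analogue in the proof of Proposition \ref{prop:abstract}) is not a pointwise domination: one decomposes $g$ over the annuli $C_j(x,4t^{1/2})$ and over dyadic time intervals, applies the $L^2$-$L^2$ off-diagonal bounds to each piece, and is left with square functions of $g$ of large aperture $\alpha\sim 2^{j+k/2}$ evaluated at $x$. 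These are controlled only in $L^p$-norm, through the change-of-aperture inequality \eqref{eq:hnp} with constant $\alpha^{n/(p\wedge 2)}$, and the threshold $p>\frac{2n}{n+2(1+\beta)}$ is precisely the condition under which the growth $\alpha^{n/p}$ (for $p<2$) is beaten by the geometric decay in $j$ and $k$ coming from the off-diagonal bounds, the time localisation and the weight. Because the sharp aperture cost $\alpha^{n/p}$ depends on $p$, it cannot be encoded in a single pointwise inequality $\mathcal{A}_\beta(AS*g)\lesssim\big(\mathcal{M}[(\mathcal{A}_\beta g)^{s}]\big)^{1/s}$ valid for all $p>s$ at once; such a bound would moreover yield a weak-type $(s,s)$ endpoint and $A_{p/s}$-weighted estimates, none of which follow from the hypotheses.

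The step you designate to produce the exponent $s=\frac{2n}{n+2(1+\beta)}$ --- a ``parabolic Poincar\'e--Sobolev inequality'' converting the local quantity $\int_0^{Cr^2}\int_{B(x,Cr)}|g(t,y)|^2\,dy\,t^{-\beta}dt$ into an $L^s$-average of $\mathcal{A}_\beta g$ with $s<2$ --- is the point of failure: by Fubini that quantity is already controlled by $\int_{B(x,C'r)}(\mathcal{A}_\beta g)^2\,dz$, i.e.\ by an $L^2$-average, and passing from an $L^2$-average to an $L^s$-average with $s<2$ goes the wrong way in H\"older's inequality. A general element of $\tmbp$ has no reverse-H\"older self-improvement, and $AS*g$ does not locally solve an equation to which a Caccioppoli/Sobolev argument could be applied, so no source for the improvement is identified; indeed the correct origin of the exponent is the $L^p$ change of aperture, not any Sobolev embedding. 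Your fallback via atoms does not repair this: uniform boundedness on $T_{2,\beta}^{1,2}$-atoms is exactly what requires the stronger $L^1$-$L^2$ off-diagonal bounds of Theorem \ref{thm:abstract-q}, and with only $L^2$-$L^2$ bounds the $p=1$ endpoint is out of reach, leaving nothing to interpolate from. The fix is to replace the pointwise domination by the annulus-by-annulus estimates followed by \eqref{eq:hnp}, as in the proof of Proposition \ref{prop:abstract}.
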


It is routine to see that the inclusions
$$L^2(t^{-\b}\,dt;\Dom(A))\cap \tmbp \embed L^2(t^{-\b}\,dt;\R^n)\cap \tmbp
= \tmb \cap \tmbp \embed \tmbp$$ are dense, so the above result gives the unique
extendability of $g\mapsto AS*g$ to a bounded operator on $\tmbp$.

The proof of this result, as well as that of Theorem \ref{thm:main} below,
depends on a change of aperture result for tent spaces. 
Tent spaces with 
{\em aperture $\alpha > 0 $} are defined by the norms
$$
\|g\|_{T_{2,\beta,\alpha}^{p,2}} := \Big( \int  _{\R^{n}} 
\Big(\int  _{0} ^{\infty} \fint  _{B(x,\alpha t^\frac12)}| g(y,t)|^{2} \,dy\,
\frac{dt}{t^{\beta}} \Big)^{\frac{p}{2}} \,dx \Big)^{\frac{1}{p}}.
$$
For all $\alpha \ge 1$  one has
\begin{align}\label{eq:hnp}  \|g\|_{T_{2,\beta,\alpha}^{p,2}}
\le C \a^{n/(p\wedge 2)} \|g\|_{\tmbp }
\end{align}
for some constant $C$ independent of $\alpha$ and $m$. This was first
proved in \cite{HNP} in a vector-valued context, but with an additional logarithmic factor. 
A different proof in the scalar-valued case was obtained in \cite{a-angle}. The important point is that the right-hand side improves the classical
bound from \cite{CMS}. The weighted parabolic situation treated here follows from these results applied to the function $(t,y)\mapsto t^{\beta+1}f(t^2,y)$ (see \cite{AMP}). 
For later use we mention that the bounds \eqref{eq:hnp} extend 
to the Hilbert space-valued tent spaces $\tmbpH$ (which are defined 
in the obvious way).

\section{The main result}

Given a probability space $(\Omega,\F,\P)$ 
endowed with a filtration $\F = (\F_t)_{t\ge 0}$ and a real Hilbert space $H$, unless stated otherwise, 
$\W = (\W(s))_{s \geq 0}$ denotes a $\mathscr{F}$-cylindrical Brownian 
motion in $H$ (see, e.g., \cite{NVW11b} for the precise definition)
which we consider to be fixed throughout the rest of the paper.
In applications to stochastic partial differential
equations, one typically takes $H$ to be $L^2(D)$
for some domain $D\subseteq \R^n$; this provides the mathematically
rigorous model for space-time white noise on $D$. Also note that for $H=\R^d$,
$\W$ is just a standard $\F$-Brownian motion in $\R^d$.

An {\em $\F$-adapted simple process} with values in $H$ is a measurable mapping 
$g:\R_+\times\R^n\times\Omega\to H$ of the form
$$ g(t,x,\omega) = \sum_{\ell=1}^N \one_{(t_\ell, t_{\ell+1}]}(t)
\sum_{m=1}^N \one_{A_{m\ell}}(\omega) \phi_{m\ell}(x)$$
with $0\le t_1<\dots<t_N<t_{N+1}<\infty$, $A_{m\ell}\in\mathscr{F}_{t_\ell}$, 
and $\phi_{m\ell}$ simple functions on $\R^n$ 
with values in $H$. For such processes, the stochastic convolution process
$$
S\diamond g(t):= \int_{0} ^{t} S(t-s)g(s)\,d\W(s)
$$
is well-defined as an $L^2$-valued process 
whenever $S = (S(t))_{t\ge 0}$ is a $C_0$-semigroup of bounded linear operators on $L^2$
(see, e.g., \cite{nvw}). 

The main result of this paper reads as follows.

\begin{theorem}[Conical stochastic maximal $L^p$-regularity]\label{thm:main}
Let $A= -{\rm div}\, a(\cdot) \nabla$  be 
a divergence form elliptic operator on $\R^n$ 
with bounded measurable real-valued coefficients, and denote by $S = (S(t))_{t\ge 0}$ 
the   analytic  $C_0$-contraction semigroup generated by $-A$. Then
for all  $1\le p<\infty$ and $\beta>0$,
and all adapted simple processes 
$g:\R_+\times\R^n\times\Omega\to H$ one has
$$\E \n \nabla S \diamond g \n_{\tbp (\R^n) }^p \le C_{p,\b}^p \E \n g\n_{\tbpH   }^p,$$
with constant $C_{p,\b}$ independent of $g$ and $H$. 
\end{theorem}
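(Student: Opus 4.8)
The plan is to prove the estimate first at the exponent $p=2$, where it comes directly from the It\^o isometry, and then to extrapolate to all $1\le p<\infty$ by off-diagonal estimate techniques, based on the $L^1$-$L^2$ off-diagonal bounds of $(\sqrt t\,\nabla S(t))_{t>0}$ from Example \ref{ex:divform}.

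\emph{Step 1 (the case $p=2$).} First I would observe that, by Fubini's theorem, the parabolic tent norm of exponent $2$ collapses to a weighted Bochner norm, $\n h\n_{\tbH}^2=\int_0^\infty\n h(t)\n_{\gamma(H,L^2)}^2\,t^{-\beta}\,dt$, and likewise for $\tb(\R^n)$ with $H$ replaced by $\R^n$. Applying the It\^o isometry to $\nabla S\diamond g(t)=\int_0^t\nabla S(t-s)g(s)\,d\W(s)$ for each fixed $t$, integrating against $t^{-\beta}\,dt$, and using Fubini together with the substitution $t=s+r$, one arrives at
\[
\E\n\nabla S\diamond g\n_{\tb(\R^n)}^2=\E\int_0^\infty\Big(\int_0^\infty\n\nabla S(r)g(s)\n_{\gamma(H,L^2)}^2\,\frac{dr}{(s+r)^\beta}\Big)\,ds .
\]
To bound the inner $r$-integral by $C\,s^{-\beta}\n g(s)\n_{\gamma(H,L^2)}^2$ one splits it at $r=s$. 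On $\{r<s\}$ one uses $(s+r)^{-\beta}\le s^{-\beta}$ together with the energy estimate $\int_0^\infty\n\nabla S(r)h\n_{L^2}^2\,dr\le C\n h\n_{L^2}^2$, which follows from the ellipticity of $a$ by pairing $\partial_r e^{-rA}h$ with $e^{-rA}h$ and integrating (it passes to the $H$-valued setting by summing over an orthonormal basis of $H$). On $\{r>s\}$ one uses $(s+r)^{-\beta}\le r^{-\beta}$, the gradient bound $\n\sqrt r\,\nabla S(r)\n_{\mathcal L(L^2)}\le C$ (the $L^2$-$L^2$ off-diagonal bound of $(\sqrt t\,\nabla S(t))_{t>0}$ with $E=F=\R^n$), and the convergence of $\int_s^\infty r^{-1-\beta}\,dr$ --- which is exactly where the hypothesis $\beta>0$ enters. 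Inserting these bounds, integrating in $s$, and recognising $\int_0^\infty s^{-\beta}\n g(s)\n_{\gamma(H,L^2)}^2\,ds=\n g\n_{\tbH}^2$ gives $\E\n\nabla S\diamond g\n_{\tb(\R^n)}^2\le C^2\,\E\n g\n_{\tbH}^2$, with a constant independent of $H$.

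\emph{Step 2 (extrapolation to $1\le p<\infty$).} First I would remove the randomness. For $x\in\R^n$ put $\Gamma(x):=\{(t,y):|x-y|<\sqrt t\}$ and let $\mathcal H_x:=L^2(\Gamma(x),\,t^{-n/2-\beta}\,dt\,dy;\R^n)$, so that $\n h\n_{\tbp(\R^n)}^p$ equals a fixed constant times $\int_{\R^n}\n h\n_{\mathcal H_x}^p\,dx$. For each $x$, the restriction of $\nabla S\diamond g$ to $\Gamma(x)$ is the terminal value of an $\mathcal H_x$-valued continuous martingale $u\mapsto\int_0^u\Psi_x(s)\,d\W(s)$ (the identification being a stochastic Fubini argument, elementary for adapted simple processes). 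The Burkholder--Davis--Gundy inequality for Hilbert space-valued martingales then gives, for every $0<p<\infty$,
\[
\E\n\nabla S\diamond g\n_{\tbp(\R^n)}^p\le C_p\,\E\n F[g]\n_{\tbp(\R^n)}^p,\qquad F[g](t,y):=\Big(\int_0^t\n(\nabla S(t-s)g(s))(y)\n_{\gamma(H,\R^n)}^2\,ds\Big)^{1/2},
\]
so it remains to prove the \emph{deterministic} conical square function estimate $\n F[h]\n_{\tbp(\R^n)}\le C_{p,\beta}\,\n h\n_{\tbpH}$ for all deterministic $h$, after which one applies it pathwise to $h=g(\cdot,\cdot,\omega)$ and takes expectations. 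By Step 1 (read without the expectation) this estimate holds at $p=2$, and $(\sqrt t\,\nabla S(t))_{t>0}$ satisfies $L^1$-$L^2$ off-diagonal bounds by Example \ref{ex:divform}. I would then extrapolate as in \cite{AMP}: for $1\le p<2$ via the atomic decomposition of $T_{2,\beta}^{1,2}$ --- testing $F$ on an atom supported over a parabolic ball, bounding the part of $F[h]$ over the doubled tent by the $p=2$ estimate together with the $L^2$-boundedness of $\sqrt t\,\nabla S(t)$, and summing the contributions of a dyadic family of far annuli by means of the Gaussian factor $\exp(-c\,d^2/t)$ --- followed by interpolation with the $p=2$ estimate; and for $2<p<\infty$ by interpolating the $p=2$ estimate with a Carleson-type $T_{2,\beta}^{\infty,2}$ endpoint bound that again follows from the off-diagonal bounds. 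The change of aperture estimate \eqref{eq:hnp} is used to reconcile the various apertures produced by the annular splitting. All of this is essentially the content of the abstract extrapolation result, Theorem \ref{thm:abstract-q}.

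I expect Step 2 to be the main obstacle, and within it the endpoint $p=1$: the crux is to sum the Gaussian off-diagonal tails of $\sqrt t\,\nabla S(t)$ against the \emph{parabolic}, $t^{-\beta}$-weighted cone measure when $F$ is applied to a single atom, and the interplay of the parabolic scaling $|x-y|\sim\sqrt t$ with the weight $\beta$ and with the change of aperture is delicate; it is here that the finer $L^1$-$L^2$ off-diagonal bounds (rather than merely $L^2$-$L^2$ ones) are needed in order to reach every $p\ge1$ regardless of the dimension. Throughout, one has to make sure the argument never invokes pointwise kernel bounds for $\nabla S(t)$ --- which are unavailable under merely bounded measurable coefficients --- but only off-diagonal bounds.
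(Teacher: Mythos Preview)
Your proposal is correct and follows the same overall architecture as the paper --- an $L^2$-estimate followed by off-diagonal extrapolation via Theorem~\ref{thm:abstract-q} --- but your Step~1 is genuinely different and more elementary. The paper establishes the $p=2$ case by first invoking Da~Prato's abstract stochastic maximal $L^2$-regularity (Proposition~\ref{prop:beta}) to bound $\|A^{1/2}S\diamond g\|$, and then appealing to the solution of the Kato square root problem \eqref{eq:ksqrt} to pass from $A^{1/2}$ to $\nabla$. Your direct computation via the It\^o isometry and the parabolic energy estimate $\int_0^\infty\|\nabla e^{-rA}h\|_{L^2}^2\,dr\le C\|h\|_{L^2}^2$ (which indeed follows from ellipticity alone, by the pairing argument you describe) bypasses both of these ingredients. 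This is a real simplification in the divergence-form setting, though of course it does not yield the abstract Hilbert-space statement of Proposition~\ref{prop:beta}.

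For Step~2 your reduction via Burkholder--Davis--Gundy to a deterministic square-function estimate matches the paper's equation~\eqref{eq:comp-Tp2m}. One minor discrepancy: for $p>2$ you sketch an interpolation with a Carleson-type $T_{2,\beta}^{\infty,2}$ endpoint, whereas the paper (Proposition~\ref{prop:abstract}) treats all $p\in[2,\infty)$ directly by a dyadic annular decomposition combined with the change-of-aperture inequality~\eqref{eq:hnp}, using only $L^2$-$L^2$ off-diagonal bounds; the $L^1$-$L^2$ bounds are invoked solely for the atomic endpoint $p=1$ in Theorem~\ref{thm:abstract-q}. Since you ultimately cite Theorem~\ref{thm:abstract-q}, this difference is cosmetic.
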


\begin{remark}
Compared to the 
results given in \cite{Kry94, Kry06, NVW11}, Theorem \ref{thm:main} gives conical 
stochastic maximal $L^p$-regularity 
for $1\le p<\infty$, while stochastic maximal $L^{p}$-regularity 
 can only hold for $2\le p<\infty$ even for $A = -\Delta$ (see \cite{Kry} and the discussion
in the Introduction). 
\end{remark}

The proof of Theorem \ref{thm:main} combines two ingredients: a $\tb$ estimate, and
an extrapolation result based on off-diagonal bounds for $L$ which gives the $\tbp$ estimate. 
These steps are carried out in Sections \ref{sec:T22} and \ref{sec:extrapolation}, respectively. 

\section{Conical stochastic maximal $L^{2}$-regularity}\label{sec:T22}

A classical stochastic maximal $L^2$-regularity result due to Da Prato 
(see \cite[Theorem 6.14]{DPZ}) asserts that 
if $-A$ generates an analytic $C_0$-contraction semigroup $(S(t))_{t\geq 0}$
on a Hilbert space $E$ and $g$ is an 
  $\F$-adapted simple process with values in the vector space $H\otimes E$
of finite rank operators from $H$ to $E$, then
there exists a constant $C\ge 0$, independent of $g$ and $H$, such that
\begin{align}\label{eq:zero}
\mathbb{E}\n A^{\frac{1}{2}}S\diamond g\n_{L^{2}(\R_+;E)}^2 \leq C^2 
\mathbb{E}\|g\|_{L^{2}(\R_+;\calL_2(H,E))}^2.
\end{align}
Here, $\mathscr{L}_2(H,E)$ denotes the space of Hilbert-Schmidt operators
from $H$ to $E$. 

This estimate has the following weighted analogue.

\begin{proposition}\label{prop:beta}
Suppose $-A$ generates an analytic $C_0$-contraction semigroup $S = (S(t))_{t\geq 0}$
on a Hilbert space $E$. Then for all $\beta \geq 0$ there exists a constant $C_\beta\ge 0$ such that 
for all $\F$-adapted simple processes 
$g: \R_+\times\O\to \calL_2(H,E)$,
$$
\mathbb{E}\|A^{\frac{1}{2}}S\diamond g\|_{L^{2}(\R_+,t^{-\beta}dt;E)}^2 \leq C_\beta^2 
\|g\|_{L^{2}(\R_+,t^{-\beta}dt;\mathscr{L}_{2}(H,E))}^2.
$$
\end{proposition}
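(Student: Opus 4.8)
The plan is to strip off the randomness with the It\^o isometry, reducing the claim to a deterministic square function estimate for the semigroup, and then to absorb the weight $t^{-\beta}$ essentially for free using $\beta\ge 0$. I do not expect a serious obstacle here: the argument is short, and the only genuine analytic input is the \emph{unweighted} estimate \eqref{eq:zero} (equivalently, the classical square function estimate for $A$), which is assumed/available.

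First I would fix $t>0$. Since $g$ is an adapted simple process with values in the finite rank operators and $S(t-s)$ maps $E$ into $\Dom(A^{\frac12})$ for $s<t$, the stochastic integral $A^{\frac12}S\diamond g(t)=\int_0^t A^{\frac12}S(t-s)g(s)\,d\W(s)$ is well defined and the It\^o isometry gives
\[
\E\n (A^{\frac12}S\diamond g)(t)\n_E^2 = \E\int_0^t \n A^{\frac12}S(t-s)g(s)\n_{\calL_2(H,E)}^2\,ds .
\]
Multiplying by $t^{-\beta}$, integrating in $t$ over $\R_+$, applying Tonelli's theorem, and substituting $r=t-s$ in the inner integral turns the left-hand side of the proposition into
\[
\E\n A^{\frac12}S\diamond g\n_{L^2(\R_+,t^{-\beta}dt;E)}^2 = \E\int_0^\infty\int_0^\infty \n A^{\frac12}S(r)g(s)\n_{\calL_2(H,E)}^2\,\frac{dr}{(r+s)^{\beta}}\,ds .
\]
This is the one place where $\beta\ge 0$ is used: then $(r+s)^{-\beta}\le s^{-\beta}$ for all $r,s>0$, so the right-hand side is at most
\[
\E\int_0^\infty \frac{1}{s^{\beta}}\Bigl(\int_0^\infty \n A^{\frac12}S(r)g(s)\n_{\calL_2(H,E)}^2\,dr\Bigr)\,ds .
\]

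It then remains to bound, for fixed $\om$ and $s$, the inner integral $\int_0^\infty \n A^{\frac12}S(r)T\n_{\calL_2(H,E)}^2\,dr$ with $T=g(s,\om)$. Expanding the Hilbert--Schmidt norm over an orthonormal basis $(h_k)$ of $H$ reduces this to the scalar estimate $\int_0^\infty\n A^{\frac12}S(r)x\n_E^2\,dr\le C^2\n x\n_E^2$, valid for every $x\in E$; this is precisely \eqref{eq:zero} specialised to deterministic step functions $g=\one_{(0,\e)}T$ (and, abstractly, is the square function estimate for the m-accretive operator $A$ on the Hilbert space $E$, which has a bounded $H^\infty$-calculus). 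Summing the resulting bounds $\int_0^\infty\n A^{\frac12}S(r)(Th_k)\n_E^2\,dr\le C^2\n Th_k\n_E^2$ over $k$ gives $\int_0^\infty \n A^{\frac12}S(r)T\n_{\calL_2(H,E)}^2\,dr\le C^2\n T\n_{\calL_2(H,E)}^2$, and feeding this back yields
\[
\E\n A^{\frac12}S\diamond g\n_{L^2(\R_+,t^{-\beta}dt;E)}^2\le C^2\,\E\n g\n_{L^2(\R_+,t^{-\beta}dt;\calL_2(H,E))}^2 ,
\]
which is the assertion with $C_\beta = C$ (independent of $\beta$).

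The only subtlety worth flagging is that the step $(r+s)^{-\beta}\le s^{-\beta}$ is exactly what confines this simple argument to $\beta\ge 0$; for $-1<\beta<0$ one would instead need a more delicate weighted argument in the spirit of the Auscher--Axelsson estimate \eqref{eq:MR-beta}. Since only $\beta\ge 0$ is needed here, this is immaterial for us.
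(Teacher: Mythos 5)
Your argument is correct, but it is not the route the paper takes. The paper's proof follows the Auscher--Axelsson scheme of \cite{AA}: it splits the convolution at $s=t/2$, handles the far part $(0,\frac t2)$ by the It\^o isometry together with the analyticity bound $\|A^{1/2}S(t-s)\|\lesssim (t-s)^{-1/2}$ (harmless there since $t-s\ge t/2$), and reduces the near part $(\frac t2,t)$ to the unweighted Da Prato estimate \eqref{eq:zero} by replacing the weight $t^{-\beta/2}$ with $s^{-\beta/2}$ and controlling the error term $s^{-\beta/2}-t^{-\beta/2}$ by a further It\^o-isometry computation. That decomposition is the one that would also accommodate negative weights as in \eqref{eq:MR-beta}; since the proposition only asserts $\beta\ge 0$, your shortcut is legitimate: It\^o plus Tonelli and the substitution $r=t-s$ turn the weighted norm into a double integral against $(r+s)^{-\beta}\,dr\,ds$, the pointwise bound $(r+s)^{-\beta}\le s^{-\beta}$ uses exactly $\beta\ge0$, and what remains is the deterministic square function estimate $\int_0^\infty\|A^{1/2}S(r)T\|_{\calL_2(H,E)}^2\,dr\le C^2\|T\|_{\calL_2(H,E)}^2$, which you correctly extract from \eqref{eq:zero} via deterministic indicator processes (indeed, on a Hilbert space \eqref{eq:zero} is equivalent to this square function bound by the very same It\^o--Fubini computation). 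Your route buys brevity and a constant independent of $\beta$; the paper's route buys the template that extends below $\beta=0$ and stays parallel to the deterministic weighted maximal regularity argument it cites. Two minor points: the expectation missing on the right-hand side of the printed statement is a typo, and your version with $\E$ on both sides is the intended one; and the a.s.\ square-integrability of $s\mapsto A^{1/2}S(t-s)g(s)$ needed for the It\^o isometry should be justified by the same square function estimate applied to the finitely many values of the simple process $g$, since the crude bound $\|A^{1/2}S(t-s)\|\lesssim(t-s)^{-1/2}$ alone is not square-integrable near $s=t$.
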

\begin{proof} 
For $\beta = 0$, this is Da Prato's result. We thus assume that $\beta>0$.
The proof follows the lines of Theorem \ref{thm:conicalmaxreg} in \cite{AA}.
On the subinterval $(0,\frac{t}{2})$ we estimate, using the It\^o isometry,
\begin{align*}
& \mathbb{E}\Big\| t\mapsto \int_0^{\frac{t}{2}} A^{\frac{1}{2}}S(t-s)g(s)\,d\W(s)
\Big \|_{L^{2}(\R_+,t^{-\beta}dt;E)} ^2
\\ & \qquad = \E \int_0^{\frac{t}{2}}\n A^{\frac{1}{2}}S(t-s)g(s)\|_{\calL_{2}(H,E)} ^{2} 
\,ds\,\frac{dt}{t^{\beta}}
 \\ & \qquad \lesssim \E \int  _{0} ^{\infty} \int  _{0} ^{\frac{t}{2}} (t-s)^{-1}\|g(s)\|_{\calL_{2}(H,E)}  ^{2} \,ds\,\frac{dt}{t^{\beta}}
\\ & \qquad \lesssim \E \int  _{0} ^{\infty}\|g(s)\|_{\calL_{2}(H,E)}  ^{2} \,\frac{ds}{s^\beta}.
\end{align*}
On the subinterval $(\frac{t}{2},t)$ we have, using \eqref{eq:zero},
\begin{align*}
\Big(\mathbb{E}\Big\|t\mapsto \int  _{\frac{t}{2}} ^{t} &A^{\frac{1}{2}}S(t-s)g(s)\,d\W(s)\Big\|_{L^{2}(\R_+,t^{-\beta}dt;E)}^2\Big)^\frac12  \\
&\lesssim \Big(\mathbb{E}\Big\|t\mapsto \int  _{\frac{t}{2}} ^{t} s^{-\frac{\beta}{2}} A^{\frac{1}{2}}S(t-s)g(s)\,d\W(s)\Big\|_{L^{2}(\R_+;E)}^2\Big)^\frac12
\\ & \qquad + \Big(\mathbb{E}\Big\|t\mapsto \int  _{\frac{t}{2}} ^{t} (s^{-\frac{\beta}{2}}-t^{-\frac{\beta}{2}}) A^{\frac{1}{2}}S(t-s)g(s)\,d\W(s)\Big\|_{L^{2}(\R_+;E)}^2\Big)^\frac12
\\ &\lesssim \Big(\mathbb{E}\Big\|t\mapsto \int  _{0} ^{t} s^{-\frac{\beta}{2}} A^{\frac{1}{2}}S(t-s)g(s)\,d\W(s)\|_{L^{2}(\R_+;E)}^2\Big)^\frac12
\\ & \qquad + \Big(\mathbb{E}\Big\|t\mapsto \int  _{\frac{t}{2}} ^{t} (s^{-\frac{\beta}{2}}-t^{-\frac{\beta}{2}}) A^{\frac{1}{2}}S(t-s)g(s)\,d\W(s)\Big\|_{L^{2}(\R_+;E)}^2\Big)^\frac12 \\
&\lesssim \E \|g\|_{L^{2}(\R_+,\frac{ds}{s^{\beta}};\mathscr{L}_{2}(H;E))}
\\ & \qquad +\Big(\mathbb{E}\Big\|t\mapsto \int  _{\frac{t}{2}} ^{t} (s^{-\frac{\beta}{2}}-t^{-\frac{\beta}{2}}) A^{\frac{1}{2}}S(t-s)g(s)\,d\W(s)\Big\|_{L^{2}(\R_+;E)}^2\Big)^\frac12.
\end{align*}
Using once more the It\^o isometry, 
the last part is estimated as follows:
\begin{align*}
& \mathbb{E}\Big\|t\mapsto \int  _{\frac{t}{2}} ^{t} (s^{-\frac{\beta}{2}}-t^{-\frac{\beta}{2}}) 
A^{\frac{1}{2}}S(t-s)g(s)\,d\W(s)\Big\|_{L^{2}(\R_+;E)}^2 
\\ & \qquad  \lesssim \E \int  _{0} ^{\infty} \int  _{\frac{t}{2}} ^{t} 
\frac{|s^{-\frac{\beta}{2}}-t^{-\frac{\beta}{2}}|^2}{|s-t|} \|g(s)\|_{\mathscr{L}_{2}(H,E)} ^{2} \, ds\,dt\\
& \qquad \lesssim \E \int  _{0} ^{\infty} \|g(s)\|_{\mathscr{L}_{2}(H,E)} ^{2} 
\Big(\int  _{s} ^{2s} \frac{|\big(\frac{s}{t}\big)^{\frac{\beta}{2}}-1|^2}{|\frac{t}{s}-1|}\, dt\Big)\,\frac{ds}{s^{\beta+1}}
\\ & \qquad \lesssim \E \|g\|_{L^{2}(\R_+,t^{-\beta}dt;\mathscr{L}_{2}(H,E))}^2.
\end{align*}
The proof is concluded by collecting the estimates. \end{proof}
Following the principles described in Subsection \ref{subsec:conical}, we shall specialise, in the next section, to the case $E= L^{2}(\R^{n})$, and identify
$$L^{2}(\R_+,t^{-\beta}dt;\mathscr{L}_{2}(H,L^{2}(\R^{n}))
= L^{2}(\R_+,t^{-\beta}dt; L^{2}(\R^{n};H)) = \tbH
$$
(cf. \eqref{eq:T22}).

\section{Extrapolating conical stochastic maximal $L^2$-regularity}\label{sec:extrapolation}

In this section, we prove two abstract extrapolation results based on off-diagonal estimates.
Proposition \ref{prop:abstract} is an extrapolation result for 
$p\in [1,\infty)\cap (\frac{2n}{n+2\beta},\infty)$, assuming $L^2$-$L^2$ 
off-diagonal bounds, and Theorem \ref{thm:abstract-q} gives the result for $p\in [1,\infty)$, assuming 
$L^1$-$L^2$ off-diagonal bounds 
(the well-definedness of the stochastic integrals on the left-hand side 
of \eqref{eq:L2-assumption} and \eqref{eq:L2-assumption2} being part of the assumptions).  

\begin{proposition}[Extrapolation via $L^2$-$L^2$ off-diagonal bounds]\label{prop:abstract}
Let $(T_t)_{t>0}$ be a family 
of bounded linear operators on $L^2$, let $\beta>0$, and suppose
there exists a constant $C_\b\ge 0$, independent of $g$ and $H$, such that 
\begin{align}\label{eq:L2-assumption}\mathbb{E}\Big\| 
\int  _{0}^t T_{t-s}g(s,\cdot)\,d\W(s)\Big\|_{\tmb}^{2}
\le C_\b^2 \E\n g\n_{\tmbH}^2
\end{align}
for all $\F$-adapted simple $g:\R_+\times\R^n\times\O\to H$.
If $(t^{\frac12} T_t)_{t>0}$ satisfies $L^2$-$L^2$-off-diagonal 
bounds, then, for 
$p\in [1,\infty) \cap (\frac{2n}{n+2\beta},\infty)$, 
there exists a constant $C_{p,\b}\ge 0$, independent of $g$ and $H$, 
such that
$$\mathbb{E}\Big\| \int  _{0}^t T_{t-s}g(s,\cdot)\,d\W(s)\Big\|_{\tmbp }^{p}
\le C_{p,\b}^p \E \n g\n_{\tmbpH}^p.$$
\end{proposition}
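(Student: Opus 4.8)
The strategy is a Calder\'on--Zygmund-type decomposition adapted to the parabolic tent space structure, splitting into a ``local'' part handled by the $L^2$ hypothesis \eqref{eq:L2-assumption} and a ``global'' part controlled by the off-diagonal bounds. Since $\tmb\cap\tmbp$ is dense in $\tmbp$ and the stochastic convolution is linear in $g$, it suffices to prove the estimate for adapted simple $g$ with values in a finite-dimensional subspace of $H$; one then passes to the limit. The case $p=2$ is exactly \eqref{eq:L2-assumption} combined with the identification at the end of Section \ref{sec:T22}, so I would treat $p>2$ and $p<2$ separately, the latter being where the constraint $p>\frac{2n}{n+2\beta}$ enters.

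For $p\geq 2$ the natural route is a ``good-$\lambda$'' or sparse-domination argument directly in the $x$-variable, viewing $\tmbp$ as a closed subspace of $L^p(\R^n;L^2(t^{-n/2-\beta}dt\times dy))$ as recalled after \eqref{eq:T22}. Fix a ball $B=B(x_0,r)$ in $\R^n$; decompose $g = g\one_{\widehat{2B}} + g\one_{(\widehat{2B})^c}$ where $\widehat{E}$ denotes the parabolic Carleson region over $E$, or more simply split the \emph{time} integration in the stochastic convolution according to whether $t-s\lesssim r^2$ or $t-s\gtrsim r^2$ (the parabolic scaling), together with the spatial splitting at $2B$. The piece where both the spatial support of $g$ is near $B$ and $t-s$ is small is estimated in $L^2(B)$ by \eqref{eq:L2-assumption} after localisation; the remaining pieces are where the Gaussian factor $\exp(-c\,d(E,F)^2/(t-s))$ in the $L^2$-$L^2$ off-diagonal bounds for $(t^{1/2}T_t)_{t>0}$ produces rapid decay, which I would sum over dyadic annuli $2^{j+1}B\setminus 2^jB$ to bound the average of $\nabla$-less... rather, of the convolution over $B$ by a tail series $\sum_j 2^{-Nj}(\text{average of }|g|\text{ over }2^jB)$ for any $N$. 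Because the stochastic integral is being estimated in a \emph{square-function} (second-moment) sense, the off-diagonal bounds should be applied inside an It\^o isometry: one writes $\E\|\int_0^t \one_B T_{t-s}\one_{F_j} g(s)\,dW(s)\|^2 = \E\int\|\one_B T_{t-s}\one_{F_j}g(s)\|_{\calL_2}^2\,ds$ and then uses the off-diagonal bound pointwise in $s$. This reduces matters to a deterministic tent-space estimate of the form already present in \cite{AMP}, and the weight $t^{-\beta}$ together with the parabolic change of variables $(t,y)\mapsto(t^2,y)$ is absorbed exactly as in Subsection \ref{subsec:conical}.

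For $1\leq p<2$ the dual/interpolation route is not directly available, so I would instead argue via an atomic (or ``molecular'') decomposition of $\tmbp$: any $g\in\tmbp$ (for $p$ in the stated range, so that $\tmbp$ embeds in $L^p$ or in $H^1$ when $p=1$) decomposes into tent-space atoms supported over Carleson boxes $\widehat B$, with $\sum \|{\rm atom}\|^p \lesssim \|g\|_{\tmbp}^p$. By linearity it is enough to show that the stochastic convolution of a single atom $a$ supported over $\widehat B$ is, in expectation of the $p$-th power of its $\tmbp$ norm, controlled by $\|a\|_{\tmbpH}^p$ uniformly. Here one splits the range of integration into the tent $\widehat{2B}$, handled by \eqref{eq:L2-assumption} plus H\"older (paying $|B|^{1-2/p}$, which is where $p>\frac{2n}{n+2\beta}$ must be used to make the $\beta$-weighted exponents match), and the complement, handled again by the off-diagonal Gaussian tails summed over dyadic annuli; the change-of-aperture estimate \eqref{eq:hnp} with its sharp $\a^{n/(p\wedge 2)}$ dependence is what keeps the annular sum convergent.

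\textbf{Main obstacle.} The delicate point is the $1\leq p<2$ endpoint bookkeeping: one must match the parabolic scaling $t\sim r^2$, the weight exponent $\beta$, the Hilbert--Schmidt norm appearing through the It\^o isometry, and the $L^2$-averaging in the tent-space inner variable, all simultaneously, and verify that the resulting exponent of $|B|$ in the tail sum is strictly summable precisely when $p>\frac{2n}{n+2\beta}$ — this threshold is exactly the borderline where the Gaussian decay can still beat the polynomial growth of the annuli volumes against the negative weight. Keeping the constants independent of $H$ throughout (which forces every use of off-diagonal bounds to go through the $\calL_2(H,E)$-valued It\^o isometry rather than any scalar reduction) is the secondary technical burden.
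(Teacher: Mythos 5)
Your plan has a genuine gap in the range $1\le p<2$, which is precisely where the proposition has content. The atomic decomposition you invoke does not exist there: tent spaces admit atomic decompositions only for exponents $p\le 1$, so for $1<p<2$ there is no decomposition of $g\in\tmbp$ into atoms over Carleson boxes with $\sum\|{\rm atom}\|^p\lesssim\|g\|_{\tmbp}^p$. The natural repair --- prove a uniform bound on atoms at $p=1$ and interpolate with $p=2$ --- is exactly what the paper does in Theorem \ref{thm:abstract-q}, but it is unavailable under mere $L^2$-$L^2$ off-diagonal bounds: for an atom supported in $(0,r^2)\times B(x_0,r)$, the contribution on the annulus at distance $2^jr$ gains only a factor comparable to $4^{-j\beta}$ (the $L^1$-$L^2$ hypothesis of Theorem \ref{thm:abstract-q} is what produces the extra $t^{-n/2}$), while Lemma \ref{lem:Tp2} at $p=1$ costs $(2^jr)^{n/2}\cdot r^{-n/2}$, so the annular sum converges only when $\beta>n/2$, i.e.\ exactly when $\frac{2n}{n+2\beta}<1$. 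Thus your route cannot reach any $p\in\big(\frac{2n}{n+2\beta},2\big)$ when $\beta\le n/2$, which is the typical case. Relatedly, the threshold $\frac{2n}{n+2\beta}$ does not arise from a H\"older factor $|B|^{1-2/p}$ on a Carleson box: in the actual proof it comes from summing, over dyadic time scales $s\sim 2^{-k}t$, the decay $2^{-\frac k2(\frac n2+\beta)p}$ produced by the weight and the parabolic measure against the aperture-change growth $2^{\frac k2\cdot\frac{np}{p\wedge 2}}$ from \eqref{eq:hnp}.

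The paper's proof is unified in $p$ and needs neither a good-$\lambda$/sparse argument for $p\ge 2$ nor atoms for $p<2$. One first uses the two-sided It\^o isomorphism together with a square-function estimate (not just the second-moment It\^o isometry, which by itself cannot control $p$-th moments of the $\tmbp$-norm) to replace the stochastic integral by the deterministic square function $\big\n \one_{\{t\ge s\}}T_{t-s}g(s,\cdot)\big\n_{L^2(\R_+;H)}$ measured in $\tmbp$. One then decomposes, for each $x$, into terms $I_{j,k}$ (time scales $2^{-k-1}t<s\le 2^{-k}t$ and spatial annuli $C_j(x,4t^{1/2})$, estimated by the off-diagonal bounds) and terms $J_j$ (the near-diagonal times $s\in(\frac t2,t)$ and annuli $C_j(x,4s^{1/2})$), where $J_0$ is handled by applying the hypothesis \eqref{eq:L2-assumption} to the truncated process $s\mapsto \one_{B(x,4s^{1/2})}g(s,\cdot)$ pointwise in $x$, and all pieces are recombined using \eqref{eq:hnp}; convergence of the double sum is exactly the condition $p>\frac{2n}{n+2\beta}$. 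Your $p\ge 2$ sketch could probably be made to work, but as written both halves of your argument would need to be replaced by (or substantially reorganized toward) this direct square-function decomposition to obtain the stated range.
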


\begin{proof}
We introduce the sets
$$ 
C_j(x,t) = \left\{
\begin{array}{ll}
  B(x,t) &  \ j=0 \\
   B(x,2^jt)\setminus B(x,2^{j-1}t) &  \ j=1,2,\dots
\end{array}
\right.
$$
Fix an $\F$-adapted simple process $g:\R_+\times\R^n\times\O\to H$. 
Using the It\^o isomorphism for stochastic integrals 
\cite{nvw} in combination with a 
square function estimate \cite[Corollary 2.10]{nw}, we obtain 
\begin{equation}\label{eq:comp-Tp2m}
\begin{aligned}
\ & \mathbb{E}\Big\| \int  _{0} ^t T_{t-s}g(s,\cdot)\,d\W(s)\Big\|_{\tmbp }^{p}
\\ & \qquad \eqsim  \E \n \one_{\{t \geq s\}}(t,s)T_{t-s}g(s,\cdot)\n_{T_{2,\beta}^{p,2}(L^2(\R_+;H))}^p 
\\ & \qquad = 
\E  \big\n \,\n \one_{\{t \geq s\}}T_{t-s}g(s,\cdot)\n_{L^2(\R_+;H)}\,\big\n_{\tmbp } ^p
\\ & \qquad
=  \E \int  _{\R^{n}} 
\Big(\int  _{0} ^{\infty} \fint_{B(x,t^\frac12)}
\int_0^t
\n T_{t-s}[g(s,\cdot)](y)\n_{H}^{2}\,ds \,dy\,\frac{dt}{t^{\beta}} \Big)^{\frac{p}{2}} \,dx 
\\ & \qquad \le \E \sum_{j=0}^\infty \sum_{k=1}^\infty I_{j,k} + \E \sum_{j=0}^\infty J_j,
\end{aligned}
\end{equation}
where
$$ I_{j,k} = \int_{\R^n} \!\Big(
 \int_0^\infty\!\! \fint_{B(x,t^\frac12)} \int^{2^{-k}t}_{2^{-k-1}t}
\n T_{t-s} [\one_{C_j(x,4t^\frac12)} g(s,\cdot)](y)\n_H^2\,\,ds\,dy\frac{dt}{t^{\beta}}
\Big)^\frac{p}{2}dx
$$
and 
$$ J_j = \int_{\R^n} \!\Big( 
 \int_0^\infty\!\! \fint_{B(x,t^\frac12)} \int^t_{\frac{t}{2}}
\n T_{t-s} [\one_{C_j(x,4s^\frac12)} g(s,\cdot)](y)\n_H^2\,\,ds\,dy\frac{dt}{t^{\beta}}
\Big)^\frac{p}{2}dx.
$$ 
Following closely the proof given in \cite{AMP} 
we shall estimate each of these contributions separately.

We begin with an estimate for $I_{j,k}$ for $j\ge 0$ and $k\ge 1$. Using the off-diagonal bounds, we find 
\begin{align*}
\ &  \int_0^\infty \!\!\!\fint_{B(x,t^\frac12)} \int^{2^{-k}t}_{2^{-k-1}t}
\n T_{t-s} [\one_{C_j(x,4t^\frac12)} g(s,\cdot)](y)\n_H^2\,\,ds\,dy\frac{dt}{t^{\beta}}
\\ & =  \int_0^\infty\!\!\! \int^{2^{-k}t}_{2^{-k-1}t}
\frac1{t-s}\big\n \one_{B(x,t^\frac12)} (t-s)^\frac12 T_{t-s} 
[\one_{C_j(x,4t^\frac12)} g(s,\cdot)]\big\n_{L^2(H)}^2\,\,ds\frac{dt}{t^{\frac{n}{2}+\beta}}
\\ & \lesssim  \int_0^\infty\!\!\! \int^{2^{-k}t}_{2^{-k-1}t}
\frac1{t} \exp(-\frac{c 4^{j}t}{t-s})\big\n 
\one_{C_j(x,4t^\frac12)} g(s,\cdot)\big\n_{L^2(H)}^2\,\,ds\frac{dt}{t^{\frac{n}{2}+\beta}}
\\ & \lesssim \exp(-c4^{j}) \int_0^\infty\Big( \int^{2^{k+1}s}_{2^k s} \frac{dt}{t^{\frac{n}{2}+1+\beta}}\Big)
\big\n \one_{B(x,2^{j+\frac{k}{2}+3}s^\frac12)} g(s,\cdot)\big\n_{L^2(H)}^2\,ds
\\ & \lesssim \exp(-c4^{j})2^{-k(\frac{n}{2}+\beta)} \int_0^\infty  
\big\n \one_{B(x,2^{j+\frac{k}{2}+3}s^\frac12)} g(s,\cdot)\big\n_{L^2(H)}^2\,\frac{ds}{s^{\frac{n}{2}+\beta}}.
\end{align*}
By \eqref{eq:hnp} it follows that 
\begin{align*}
\E I_{j,k}
& \lesssim \exp(-\tfrac{cp}{2}4^{j})2^{-\frac12k(\frac{n}{2}+\beta)p} 
\E \n g\n_{T_{2,\beta,2^{j+{k}/{2}+3}}^{p,2}(H)}^p
\\ & \lesssim \exp(-\tfrac{cp}{2}4^{j})2^{-\frac12k(\frac{n}{2}+\beta)p}2^{(j+\frac{k}{2}+3)\frac{np}{p\wedge 2}} 
\E \n g\n_{\tmbpH}^p. 
\end{align*}
The sum  $\E \sum_{j,k} I_{j,k}$ thus converges since we assumed that $p> \frac{2n}{n+2\beta}$.

Next we estimate $J_0$. We have 
\begin{align*}
\ &  \E \int_0^\infty \!\!\!\fint_{B(x,t^{\frac{1}{2}})} \int_{\frac{t}{2}}^{t}
\n T_{t-s} [\one_{B(x,4s^\frac12)} g(s,\cdot)](y)\n_H^2\,\,ds\,dy\,\frac{dt}{t^{\beta}}
\\ & \le \E \int_0^\infty \!\!\!\fint_{B(x,t^{\frac{1}{2}})} \int_{0}^{t}
\n T_{t-s} [\one_{B(x,4s^\frac12)} g(s,\cdot)](y)\n_H^2\,\,ds\,dy\,\frac{dt}{t^{\beta}}
\\ & \le \E \int_0^\infty \!\!\!\int_{\R^n} \!\int_{0}^{t}
\n T_{t-s} [\one_{B(x,4s^\frac12)} g(s,\cdot)](y)\n_H^2\,\,ds\,dy\,\frac{dt}{t^{\frac{n}{2}+\beta}} 
\\ &  
= \E \int_0^\infty \!\!\!\int_{\R^n} \E \Big| \int_{0}^{t}
 T_{t-s} [\one_{B(x,4s^\frac12)} g(s,\cdot)](y)\,d\W(s)\Big|^2\,dy\,\frac{dt}{t^{\frac{n}{2}+\beta}} 
\\ &
 =  \E \Big\n (t,y)\mapsto \int_{0}^t
 T_{t-s} [\one_{B(x,4s^\frac12)} g(s,\cdot)](y)\,d\W(s)\Big\n_{L^2(t^{-\frac{n}{2}-\beta}dt\times dy;H)}^2
\\ & \lesssim  \E \n t\mapsto \one_{B(x,4t^\frac12)} g(t,\cdot)\n_{L^2(t^{-\frac{n}{2}-\beta}dt\times dy;H)}^2,
\end{align*}
where the last
inequality follows from 
the $\tmb$-boundedness assumption on the stochastic convolution operator.
It follows that
\begin{align*}
\E J_0
& \lesssim 
\E \int_{\R^n}\n t\mapsto \one_{B(x,4t^\frac12)} g(t,\cdot)
\n_{L^2(t^{-\frac{n}{2}-\beta}dt\times dy;H)}^p\,dx
\\ & = \E \int_{\R^n}\!\Big(\int_0^\infty \int_{\R^n} \one_{B(x,4t^\frac12)}(y) 
\n g(t,y)\n_H^2\,dy 
\,\frac{dt}{t^{\frac{n}{2}+\beta}}\Big)^\frac{p}{2}\,dx
\\ & \lesssim \E \n g\n_{\tmbpH}^p,
\end{align*}
the last of these estimates being a consequence of \eqref{eq:hnp}.

Finally we estimate $J_j$ for $j\ge 1$. We have
\begin{align*}
\ &  \int_0^\infty \!\!\! \fint_{B(x,t^\frac12)} \int^{t}_{\frac{t}{2}}
\n T_{t-s} [\one_{C_j(x,4s^\frac12)} g(s,\cdot)](y)\n_H^2\,\,ds\,dy\frac{dt}{t^{\beta}}
\\ & \lesssim 
\int_0^\infty\!\!\!  \int^{t}_{\frac{t}{2}} \frac1{t-s} \exp(-\frac{c4^{j}s}{t-s})
\n \one_{B(x,2^{j+2}s^\frac12)} g(s,\cdot)\n_{L^2(H)}^2\,\,ds\,\frac{dt}{t^{\frac{n}{2}+\beta}}
\\ & \le
\int_0^\infty \!\!\int^{t}_{\frac{t}{2}}\frac1{t-s} \exp(-c\frac{4^{j}s}{t-s})
\n \one_{B(x,2^{j+2}s^\frac12)} g(s,\cdot)\n_{L^2(H)}^2\,\frac{ds}{s^{\frac{n}{2}+\beta}}\,dt
\\ & = 
\int_0^\infty \!\!\Big(\int^{2s}_{s}\frac1{t-s} \exp(-c\frac{4^{j}s}{t-s})\,dt\Big)
\n \one_{B(x,2^{j+2}s^\frac12)} g(s,\cdot)\n_{L^2(H)}^2\,\frac{ds}{s^{\frac{n}{2}+\beta}}
\\ & \le
\exp(-\frac{c}{2} 4^{j}) \int_0^\infty \!\!\Big( \int_{s}^{2s}\frac1{t-s} 
\exp(-\frac{c}{2}\frac{4^{j}s}{t-s})\,dt\Big)
\n \one_{B(x,2^{j+2}s^\frac12)} g(s,\cdot)\n_{L^2(H)}^2\,\frac{ds}{s^{\frac{n}{2}+\beta}}
\\ & =
\exp(-\frac{c}{2}4^{j}) \int_0^\infty \!\!\Big( \int_1^\infty \exp(-\frac{c}{2}4^{j}u)\,\frac{du}{u}\Big)
\n \one_{B(x,2^{j+2}s^\frac12)} g(s,\cdot)\n_{L^2(H)}^2\,\frac{ds}{s^{\frac{n}{2}+\beta}}
\\ & \lesssim
\exp(-\frac{c}{2}4^{j})\int_0^\infty 
\n \one_{B(x,2^{j+2}s^\frac12)} g(s,\cdot)\n_{L^2(H)}^2\,\frac{ds}{s^{\frac{n}{2}+\beta}}.
\end{align*}

With \eqref{eq:hnp} it follows that
\begin{align*}
\E J_{j}
& \lesssim \exp(-c4^{j-1}p)\E \n g\n_{T_{2,\beta,2^{j+2}}^{p,2}(H)}^p
 \lesssim \exp(-c4^{j-1}p)2^{(j+2)\frac{np}{p\wedge 2}}\E \n g\n_{T_{2,\beta}^{p,2}(H)}^p, 
\end{align*}
and the sum  $\E \sum_j J_j$ thus converges.
\end{proof}

\begin{theorem}[Extrapolation via $L^1$-$L^2$ off-diagonal bounds]\label{thm:abstract-q}
Let $(T_t)_{t>0}$ be a family of bounded linear operators on $L^2$, let $\beta>0$, and suppose
there exists a constant $C_\b\ge 0$,independent of $g$ and $H$, such that 
\begin{align}\label{eq:L2-assumption2} \mathbb{E}\Big\| \int_{0}^t T_{t-s}g(s,\cdot)\,d\W(s)\Big\|_{\tmb}^{2}
\le C_\b^2 \mathbb{E} \n g\n_{\tmbH}^2
\end{align}
for all $\F$-adapted simple process $g:\R_+\times\R^n\times\O\to H$.
If $(t^{\frac12} T_t)_{t>0}$ is a
family of bounded linear operators on $L^2$ 
which satisfies $L^1$-$L^2$ 
off-diagonal bounds, then, for all $p\in [1,\infty)$,
there exists a constant $C_{p,\b}\ge 0$, independent of $g$ and $H$, such that
$$\mathbb{E}\Big\| \int  _{0}^t T_{t-s}g(s,\cdot)\,d\W(s)\Big\|_{\tmbp }^{p}
\le C_{p,\b}^p \E \n g\n_{\tmbpH}^p.$$
\end{theorem}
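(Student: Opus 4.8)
The plan is to run the argument of Proposition~\ref{prop:abstract} with one family of terms re-estimated. Since $L^{1}$-$L^{2}$ off-diagonal bounds imply $L^{2}$-$L^{2}$ off-diagonal bounds by Lemma~\ref{lem:comparisonoff}, Proposition~\ref{prop:abstract} already yields the conclusion for all $p\in[1,\infty)\cap(\tfrac{2n}{n+2\beta},\infty)$; in particular the theorem is proved when $\beta\ge n/2$, and it remains to reach $1\le p\le\tfrac{2n}{n+2\beta}$ when $\beta<n/2$. I would keep the decomposition from the proof of Proposition~\ref{prop:abstract} unchanged: after the It\^o isomorphism and the square function estimate, $\E\n\int_{0}^{t}T_{t-s}g(s,\cdot)\,d\W(s)\n_{\tmbp}^{p}$ is bounded by $\E\sum_{j,k}I_{j,k}+\E\sum_{j}J_{j}$. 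The terms $J_{j}$, $j\ge 0$, are bounded there by expressions that are already summable for every $p\ge 1$: the Gaussian factor $\exp(-c4^{j})$ dominates the aperture loss $2^{jn/(p\wedge 2)}$ coming from \eqref{eq:hnp}, while $J_{0}$ is controlled directly by the hypothesis \eqref{eq:L2-assumption2}. Hence the whole difficulty lies in re-estimating $\E\sum_{j,k}I_{j,k}$, the part where the integration time $s$ of the source is much smaller than the evaluation time $t$; the restriction $p>\tfrac{2n}{n+2\beta}$ arose \emph{only} from applying \eqref{eq:hnp} to the large ball $C_{j}(x,4t^{1/2})$, and the point is to bypass this by using the stronger $L^{1}$-$L^{2}$ bounds.

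For $I_{j,k}$ I would decompose the source $\one_{C_{j}(x,4t^{1/2})}g(s,\cdot)$ into pieces supported on balls $B_{i}$ of the \emph{natural} radius $\eqsim s^{1/2}$ and estimate each piece by the $L^{1}$-$L^{2}$ off-diagonal bound for $(t^{1/2}T_{t})_{t>0}$, using on the relevant range that $t-s\eqsim t$, that $d(B(x,t^{1/2}),C_{j}(x,4t^{1/2}))\gtrsim 2^{j}t^{1/2}$, and that $\n\one_{B_{i}}g(s,\cdot)\n_{L^{1}(H)}\le|B_{i}|^{1/2}\n\one_{B_{i}}g(s,\cdot)\n_{L^{2}(H)}\eqsim s^{n/4}\n\one_{B_{i}}g(s,\cdot)\n_{L^{2}(H)}$. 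Summing over the $\eqsim 2^{(j+k/2)n}$ balls $B_{i}$ needed to cover $C_{j}(x,4t^{1/2})$ and passing from this lattice sum back to an integral, the factor $s^{n/4}$ cancels, so that the dispersive factor $(t-s)^{-n/4}\eqsim t^{-n/4}$ survives as a net gain of $t^{-n/2}$ over the $L^{2}$-$L^{2}$ estimate. Carrying out the $dy$-average over $B(x,t^{1/2})$ and interchanging the $s$- and $t$-integrals (so that $t\in(2^{k}s,2^{k+1}s)$), the surviving powers of $t$ and $s$ balance and one obtains
\begin{align*}
& \int_{0}^{\infty}\fint_{B(x,t^{1/2})}\int_{2^{-k-1}t}^{2^{-k}t} \n T_{t-s}[\one_{C_{j}(x,4t^{1/2})}g(s,\cdot)](y)\n_{H}^{2}\,ds\,dy\,\frac{dt}{t^{\beta}}
\\ & \qquad \lesssim \exp(-2c4^{j})\,2^{2jn}\,2^{-k\beta}\int_{0}^{\infty}\Big(\fint_{B(x,C2^{j+k/2}s^{1/2})}G_{s}(z)\,dz\Big)^{2}\frac{ds}{s^{\beta}},
\end{align*}
where $G_{s}(z):=\big(\fint_{B(z,c_{n}s^{1/2})}\n g(s,\cdot)\n_{H}^{2}\big)^{1/2}$ and $c_{n}$ is a dimensional constant. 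The decisive new feature is the genuine $k$-decay $2^{-k\beta}$, which is available precisely because $\beta>0$.

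Now $\fint_{B(x,C2^{j+k/2}s^{1/2})}G_{s}(z)\,dz\le M[G_{s}](x)$, the Hardy--Littlewood maximal function of $z\mapsto G_{s}(z)$, \emph{with no dependence on the aperture} $C2^{j+k/2}$, and this is where the loss that blocked small $p$ disappears. For $1<p<\infty$ the vector-valued (Fefferman--Stein) maximal inequality in $L^{p}(\R^{n};L^{2}(\R_{+},s^{-\beta}ds))$ then gives
\begin{align*}
\E I_{j,k} & \lesssim \exp(-cp4^{j})\,2^{jnp}\,2^{-k\beta p/2}\,\E\n G_{\cdot}\n_{L^{p}(\R^{n};L^{2}(\R_{+},s^{-\beta}ds))}^{p}
\\ & \lesssim \exp(-cp4^{j})\,2^{jnp}\,2^{-k\beta p/2}\,\E\n g\n_{\tmbpH}^{p},
\end{align*}
the last step using \eqref{eq:hnp} only at the fixed aperture $c_{n}$, since $\n G_{\cdot}\n_{L^{p}(\R^{n};L^{2}(\R_{+},s^{-\beta}ds))}=\n g\n_{T_{2,\beta,c_{n}}^{p,2}(H)}$. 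Both sums, over $j\ge 0$ and over $k\ge 1$, converge, and together with the $J_{j}$-estimates from Proposition~\ref{prop:abstract} this proves the theorem for $1<p<\infty$.

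The remaining case $p=1$ is the main obstacle, because the Fefferman--Stein inequality fails there. Here I would replace the $L^{p}$ argument by the atomic description of $\Tmb$: for $\P$-almost every $\omega$, decompose $g$ into $\TmbH$-atoms $a$, each supported in a parabolic tent over a ball, and show that $t\mapsto\int_{0}^{t}T_{t-s}[a(s,\cdot)]\,d\W(s)$ has $\Tmb$-norm $\lesssim 1$ uniformly over atoms; since $\Tmb$ embeds into the Hardy space $H^{1}(\R^{n};L^{2}(t^{-n/2-\beta}\,dt\times dy))$, this together with the $T^{2,2}$-bound \eqref{eq:L2-assumption2} yields the desired $p=1$ estimate by the standard extrapolation from atoms. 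For the part of the output over a fixed dilate of the supporting tent this follows from \eqref{eq:L2-assumption2} and the Cauchy--Schwarz inequality on a set of finite measure; for the far parts (large times, distant annuli) one uses the $L^{1}$-$L^{2}$ off-diagonal bounds exactly as above, the $L^{1}$-normalisation of the atom now being what makes the resulting series converge. I expect this atomic bookkeeping at the $p=1$ endpoint to be the technical heart of the argument; everything else is a refinement of the scheme already present in Proposition~\ref{prop:abstract}.
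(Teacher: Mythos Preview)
Your route for $1<p<\infty$ via the Fefferman--Stein maximal inequality is a legitimate alternative to the paper's, and the key computation producing the decisive factor $2^{-k\beta}$ is correct (the ``Riemann-sum'' passage from $\sum_i\|\one_{B_i}g(s,\cdot)\|_{L^2(H)}$ to an integral of $G_s$ needs a line of justification, but it is standard). The paper proceeds differently: it isolates the \emph{deterministic} linear operator
\[
Mf(t,x):=\one_{\{t/2\ge s\}}\,[T_{t-s}f(s,\cdot)](x),
\]
taking values in $L^2(\R_+;H)$, proves $M:\tmbH\to\tmb(L^2(\R_+;H))$ directly and $M:\TmbH\to\Tmb(L^2(\R_+;H))$ via atoms (Lemmas~\ref{lem:Tp2} and~\ref{lem:bdd-on-atoms}), and then interpolates to cover $1\le p\le 2$; the range $p\ge 2$ already comes from Proposition~\ref{prop:abstract}. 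Your maximal-function argument handles all of $1<p<\infty$ in one stroke, at the price of a more delicate pointwise estimate on $I_{j,k}$; the paper's interpolation is shorter once the $p=1$ endpoint is in hand.

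Your $p=1$ sketch, however, has a real gap. You propose to decompose $g(\cdot,\cdot,\omega)$ into $\TmbH$-atoms \emph{for each fixed} $\omega$ and then bound ``$t\mapsto\int_0^t T_{t-s}[a(s,\cdot)]\,d\W(s)$'' for each atom. But the atoms obtained this way depend on $\omega$ in a non-adapted fashion, so that stochastic integral is not defined; and even for a deterministic atom $a$ the quantity $\big\|\int_0^t T_{t-s}a\,d\W\big\|_{\Tmb}$ is random, so ``$\lesssim 1$ uniformly over atoms'' has no clear meaning, nor can such bounds be summed over a random atomic decomposition to recover $\E\|g\|_{\TmbH}$. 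The fix is precisely what the paper does (and what you already did implicitly when you invoked the It\^o isomorphism to set up $I_{j,k}$ and $J_j$): stay at the deterministic level throughout. After the It\^o isomorphism the object to control, pathwise in $\omega$, is $\|Mg(\cdot,\cdot,\omega)\|_{\Tmb(L^2(\R_+;H))}$, and the atomic decomposition is applied to the deterministic operator $M$ acting on deterministic atoms. The local and far-part estimates you describe (via \eqref{eq:L2-assumption2} for the former, $L^1$--$L^2$ off-diagonal bounds for the latter) are then exactly right --- they become the paper's estimates on the regions $C_0$, $C_j$, $C_j'$ --- but they must be read as bounds on $\|Ma\|_{\Tmb(L^2(\R_+;H))}$, not on a stochastic integral.
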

Recall \,that\,  $L^1$-$L^2$\, off-diagonal\, bounds\, are\, stronger\, than\, $L^2$-$L^2$\, off-diagonal\, bounds by 
Lemma \ref{lem:comparisonoff}, so the previous proposition applies, and gives the result for $p \in [2,\infty)$.

The proof of Theorem \ref{thm:abstract-q} will be based on two  lemmas.
The first gives a simple sufficient condition for membership of $\tmbpH$.

\begin{lemma}\label{lem:Tp2}
If $a\in L^2(\R_+\times\R^n, t^{-\beta}dt\times dy;H)$ is supported in a set of the form
$(0,r^{2})\times B(x_0,r)$ with $r>0$ and $x_0\in \R^n$, then,  for all $1\leq p \leq 2$,
we have $a\in \tmbpH$ and 
$$ \n a\n_{\tmbpH}\lesssim r^{n(\frac1p-\frac12)}\n a\n_{L^2(\R_+\times\R^n,
t^{-\beta}dt\times dy;H)}$$ 
with implied constant depending on $n$ 
and $p$, but not on $\beta$, $r$, and $x_0$.
\end{lemma}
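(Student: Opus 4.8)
The plan is to reduce the $\tmbpH$-norm of $a$ to its $L^2$-norm over the supporting cylinder by exploiting the fact that the ``tent'' defined by $a$ lives over the ball $B(x_0,r)$ up to a controlled dilation. First I would observe that if $a$ is supported in $(0,r^2)\times B(x_0,r)$, then for the inner integral
$$ \Big(\int_0^\infty \fint_{B(x,t^{1/2})} \n a(t,y)\n_H^2\,dy\,\frac{dt}{t^\beta}\Big)^{1/2} $$
to be nonzero, the ball $B(x,t^{1/2})$ must intersect $B(x_0,r)$ for some $t\in(0,r^2)$; since $t^{1/2}<r$, this forces $x\in B(x_0,2r)$. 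Hence the outer integral in $x$ is effectively taken over a ball of radius $2r$ (volume $\eqsim r^n$), and outside that ball the integrand vanishes.

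Next I would apply Hölder's inequality in the $x$-variable with exponents $\frac{2}{p}$ and its conjugate. Writing $F(x)$ for the inner square function, we have
$$ \n a\n_{\tmbpH} = \Big(\int_{B(x_0,2r)} F(x)^p\,dx\Big)^{1/p} \le |B(x_0,2r)|^{\frac1p-\frac12}\Big(\int_{\R^n} F(x)^2\,dx\Big)^{1/2}, $$
which is valid precisely because $p\le 2$. The factor $|B(x_0,2r)|^{1/p-1/2}\eqsim r^{n(1/p-1/2)}$ is exactly the claimed power of $r$. It then remains to control $\int_{\R^n} F(x)^2\,dx$, which by \eqref{eq:T22} (with $H$-valued integrand) equals $\n a\n_{L^2(\R_+\times\R^n,t^{-\beta}dt\times dy;H)}^2$; this is the identity noted at the start of Subsection \ref{subsec:conical} and it carries over verbatim to the Hilbert-space-valued setting by taking norms pointwise in $H$. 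Collecting these bounds gives the asserted estimate, with the implied constant coming only from the dimensional constant relating $|B(x_0,2r)|$ to $r^n$ and is therefore independent of $\beta$, $r$, and $x_0$.

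There is no serious obstacle here; the only point requiring a little care is the geometric support argument in the first step — namely checking that the nonvanishing of $F(x)$ genuinely confines $x$ to a ball comparable to $B(x_0,r)$, using that the support in $t$ is capped at $r^2$ so that the averaging radius $t^{1/2}$ never exceeds $r$. Everything else is Hölder's inequality together with the unweighted/weighted Fubini identity \eqref{eq:T22}, so the argument is short and the constants are manifestly of the stated form.
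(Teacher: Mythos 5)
Your argument is correct and is essentially the paper's proof: the same support observation confining $x$ to $B(x_0,2r)$, followed by H\"older with exponents $\tfrac2p$ and $\tfrac{2}{2-p}$, and then the averaging/Fubini identity in the $x$-variable. The only cosmetic difference is that you enlarge the $x$-integral to all of $\R^n$ and invoke the isometry \eqref{eq:T22} (valid verbatim for $H$-valued functions), whereas the paper performs the Fubini step over $B(x_0,2r)$ and bounds the ratio $|B(x_0,2r)\cap B(y,t^{1/2})|/|B(y,t^{1/2})|$ by $1$ --- the same computation.
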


\begin{proof}
Noting that, for $t \in (0,r^{2})$, $B(x_0, r)\cap B(x,t^{\frac12})\not=\emptyset$ only if
$|x-x_0|<t^{\frac12} + r \le 2r$,  from H\"older's inequality we obtain 
\begin{align*}
\n a\n_{\tmbpH}^p & = \int_{\R^n} \Big(\int_0^\infty\fint_{B(x,t^{\frac12})}\n a(t,y)\n_H ^2\,dy 
\frac{dt}{t^{\beta}} \Big)^\frac{p}{2} \,dx
\\ & = \int_{B(x_0,2r)} \Big(\int_0^{r^2}\fint_{B(x,t^{\frac12})}\n a(t,y)\n_H ^2\,dy 
\frac{dt}{t^{\beta}} \Big)^\frac{p}{2} \,dx
\\ & \le \Big(\int_{B(x_0,2r)} \,dx\Big)^{1-\frac{p}{2}}
\Big(\int_{B(x_0,2r)} \int_0^{r^2}\fint_{B(x,t^{\frac12})}\n a(t,y)\n_H ^2\,dy 
\frac{dt}{t^{\beta}}\,dx \Big)^\frac{p}{2} 
\\ & \lesssim 
r^{n(1-\frac{p}{2})}
\Big(\int_{B(x_0,2r)} \int_0^{r^2}\int_{\R^n}\frac{\one_{B(x,t^{\frac12})}(y)}{|B(x,t^{\frac12})|}\n a(t,y)\n_H ^2\,dy 
\,\frac{dt}{t^{\beta}}\,dx \Big)^\frac{p}{2} 
\\ & =
r^{n(1-\frac{p}{2})}
\Big(\int_0^{r^2}\int_{\R^n}\int_{B(x_0,2r)} \frac{\one_{B(y,t^{\frac12})}(x)}{|B(y,t^{\frac12})|}\n a(t,y)\n_H ^2\,dx\,dy 
\,\frac{dt}{t^{\beta}} \Big)^\frac{p}{2} \\ & =
r^{n(1-\frac{p}{2})}
\Big( \int_0^{r^2}\int_{\R^n}\frac{|B(x_0,2r)\cap B(y,t^{\frac12})|}{|B(y,t^{\frac12})|}\n a(t,y)\n_H ^2\,dy 
\,\frac{dt}{t^{\beta}} \Big)^\frac{p}{2} 
\\ & \le r^{n(1-\frac{p}{2})}
\Big(\int_0^{\infty}\int_{\R^n} \|a(t,y)\|_{H}^2 \,dy
\,\frac{dt}{t^{\beta}} \Big)^\frac{p}{2}.
\end{align*}
\end{proof}

For the second lemma we need to introduce some terminology.
An {\em atom} with values in $H$ is a function $a:\R_+\times\R^n\to H$ 
supported in a set of the form $(0,r^{2})\times B(x_0,r)$ 
for some $r>0$ and  $x_0 \in \R^{n}$ and 
satisfying the estimate 
$$
\n a\n_{L^2(\R_+\times\R^n,t^{-\beta}dt\times dy;H)} \leq r^{-\frac{n}{2}}.
$$
By the previous lemma, any atom belongs to $\TmbH$
with norm $\n a\n_{\TmbH}\lesssim 1.$
 
The next lemma is a consequence of the well-known fact that $\TmbH$ admits an atomic decomposition, and interpolation.

\begin{lemma} \label{lem:bdd-on-atoms} 
Let $\beta \in \R$ and let $\mathscr{H}$ be a Hilbert space.
A bounded linear operator from $\tmbH  $ to $\tmb(\mathscr{H})$, which 
is uniformly bounded on atoms, extends to a bounded operator from $\TmbH$ to $\Tmb(\mathscr{H})$.
\end{lemma}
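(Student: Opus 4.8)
The plan is to deduce the statement from the atomic decomposition of the tent space $\TmbH$ together with a real interpolation argument between the $p=1$ and $p=2$ endpoints. First I would recall the atomic decomposition: every $f\in\TmbH$ can be written as $f=\sum_k \lambda_k a_k$ with $a_k$ atoms (as defined just above the lemma), $\sum_k|\lambda_k|\lesssim \n f\n_{\TmbH}$, with convergence in $\TmbH$; this is the $H^1$-type decomposition for tent spaces, available in the vector-valued setting via the identification of $\TmbH$ with a closed subspace of $H^1(\R^n;L^2(t^{-n/2-\beta}dt\times dy;H))$ mentioned earlier in the excerpt (citing \cite{HTV}, \cite{CMS}). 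Granting uniform boundedness of our operator $R$ on atoms, say $\n Ra_k\n_{\Tmb(\mathscr H)}\le M$, the decomposition immediately gives, by the triangle inequality and completeness of $\Tmb(\mathscr H)$, that $Rf=\sum_k\lambda_k Ra_k$ converges in $\Tmb(\mathscr H)$ with $\n Rf\n_{\Tmb(\mathscr H)}\le M\sum_k|\lambda_k|\lesssim \n f\n_{\TmbH}$. One must check this extension is consistent with the given operator on $\tmbH\cap\TmbH$, which follows because both $\tmbH$ and $\TmbH$ embed into a common ambient space (the weighted $L^2$-based space / Hardy space above) and the atomic series converges there as well.

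The mild subtlety is that the hypothesis is phrased for an operator defined on $\tmbH$ (with $p=2$), not a priori on atoms in $\TmbH$; but by Lemma~\ref{lem:Tp2} every atom lies in $\tmbH\cap\TmbH$ (indeed $a\in L^2(\R_+\times\R^n,t^{-\beta}dt\times dy;H)$ with compact-in-$(t,x)$ support forces $a\in\tmbH$ via the $p=2$ case of that lemma), so "uniformly bounded on atoms" is meaningful as stated, and the decomposition argument applies verbatim. Alternatively, and perhaps more cleanly, one invokes real interpolation: by the complex or real interpolation identity for tent spaces, $[\TmbH,\tbH]_{\theta}=\tmbpH$ with $\frac1p=1-\frac\theta2$ (equivalently $\tmbpH$ is the intermediate space between the $p=1$ and $p=2$ tent spaces, see \cite{CMS}, \cite{HTV}, \cite{Stein}), and likewise on the target side; an operator bounded $\TmbH\to\Tmb(\mathscr H)$ and $\tbH\to\tb(\mathscr H)$ is then bounded $\tmbpH\to\tmbp(\mathscr H)$ for $1<p<2$. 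Combining the $p=1$ endpoint obtained from the atomic argument with the given $p=2$ boundedness yields the full range $1\le p\le 2$, and Proposition~\ref{prop:abstract} (via Lemma~\ref{lem:comparisonoff}) already covers $2\le p<\infty$.

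I expect the main obstacle to be purely bookkeeping rather than conceptual: one must make sure the vector-valued atomic decomposition of $\TmbH$ is genuinely available in the weighted parabolic setting used here, and that the interpolation scales on the source space (values in $H$) and the target space (values in the auxiliary Hilbert space $\mathscr H$) are the ones claimed. Both points reduce to the known scalar results by the change of variables $(t,y)\mapsto t^{\beta+1}f(t^2,y)$ already used in the excerpt for \eqref{eq:hnp}, together with the observation that tent spaces with values in a Hilbert space behave exactly like the scalar ones for these purposes (the norm only ever sees $\n\cdot\n_H$ inside an $L^2$ average). Once those are in place, the proof is a few lines. I would therefore present it as: (i) atoms belong to $\tmbH$ by Lemma~\ref{lem:Tp2}, so the hypothesis makes sense; (ii) atomic decomposition plus uniform bounds on atoms give $\TmbH\to\Tmb(\mathscr H)$ boundedness, with consistency checked in the ambient space; (iii) interpolate with the $p=2$ hypothesis to fill in $1<p<2$.
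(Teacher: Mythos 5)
Your overall route---atomic decomposition of $\TmbH$ plus uniform bounds on atoms---is the same as the paper's, but the step you treat as routine is exactly where the difficulty lies, and your justification of it does not hold up. First, for a general $f\in\TmbH$ you \emph{define} $Rf:=\sum_k\lambda_k Ra_k$ from a chosen atomic decomposition; nothing in your argument shows that this is independent of the chosen decomposition. This is not a formality: uniform boundedness on atoms does not by itself yield a bounded extension to the atomic space (well-known counterexamples exist already for linear functionals on $H^1(\R^n)$), and this is precisely the ``subtle point'' the paper flags right after the lemma. Second, your consistency check on $\tmbH\cap\TmbH$ via a ``common ambient space'' does not close the argument: if $S_N=\sum_{k\le N}\lambda_k a_k\to f$ only in $\TmbH$ (hence in the ambient space), you cannot conclude that $RS_N\to Rf$, because $R$ is given as bounded only on $\tmbH$ and you have no continuity of $R$ with respect to the ambient topology. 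To commute $R$ with the limit you need the atomic series of $f\in\tmbH\cap\TmbH$ to converge \emph{in $\tmbH$} as well; this is the content of the modification of \cite[Theorem 4.9, Step 3]{amr} that the paper invokes, and it is the place where the hypothesis of $\tmbH$-boundedness is actually used.

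The standard repair is to argue on the dense subspace $\tmbH\cap\TmbH$ of $\TmbH$: for such $f$, produce an atomic decomposition converging in both $\TmbH$ and $\tmbH$, apply $R$ termwise using its $\tmbH$-continuity to identify $Rf=\sum_k\lambda_k Ra_k$, deduce $\n Rf\n_{\Tmb(\mathscr H)}\lesssim\n f\n_{\TmbH}$ from the uniform atom bounds, and then extend by density; this also makes the well-definedness question moot, since on the dense subspace $Rf$ is the value of the already given operator. Two smaller points: your observation that atoms lie in $\tmbH$ (immediate from their definition, since $\tmbH$ is the weighted $L^2$ space, or from Lemma \ref{lem:Tp2}) is correct and is indeed needed for ``uniformly bounded on atoms'' to make sense; and the interpolation discussion is extraneous to this lemma---the statement concerns only the $p=1$ endpoint, and interpolation between $p=1$ and $p=2$ is invoked later, in the proof of Theorem \ref{thm:abstract-q}.
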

A subtle point here is that an operator that is uniformly bounded on atoms is not necessarily defined on $\TmbH$.
However, if the operator is also bounded on $\tmbH $, 
then a simple modification of \cite[Theorem 4.9, Step 3]{amr} takes care of this issue.

\begin{proof}[Proof of Theorem \ref{thm:abstract-q}]
Given a simple function
$f:\R_+\to L^2\otimes H$, let
$$Mf(t,x) := 
\one_{\{\frac{t}{2} \geq s\}}[T_{t-s}f(s,\cdot)](x).
$$

As in  
\eqref{eq:comp-Tp2m},
given an adapted simple process $g:\R_+\times\Omega\to L^2\otimes H$, for all $1\le p<\infty$, 
we have 
$$
\E \Big\n \int_0^{\frac{t}{2}}  T_{t-s} g(s,\cdot)\,d\W(s)\Big\n^p_{\tmbp } \lesssim
\E \|Mg\|_{\tmbp(L^2(\R_+;H)) }^p.
$$
Hence the theorem is proved once we show that the 
linear mapping $M$ is bounded 
from $\tmbpH$  to $\tmbp(L^2(\R_+;H)) $ for $p\in [1,2]$. 
Indeed, the stochastic integral over the interval 
$(\frac{t}{2},t)$ has already been estimated in the proof of 
Proposition \ref{prop:abstract}.

By interpolation, it suffices to consider the exponents $p=1$ and $p=2$.

\medskip
{\em Step 1} -- 
We start with the case $p=2$. 
Proceeding as in \eqref{eq:comp-Tp2m}, using the isometry $\tmbH = L^2(\R_+\times\R^n,t^{-\beta}dt\times dy;H)$ 
(first with $H$ replaced by $\R$ and at the end of the computation with $H$), Fubini's theorem, the uniform boundedness of 
the operators $t^{\frac12} T_t$, 
we obtain  
\begin{align*}
\|Mf\|_{\tmbp(L^2(\R_+;H))} ^{2} 
&  =  \Big\n \,\big\n \one_{\{\frac{t}{2} \geq s\}}
T_{t-s}f(s,\cdot)\big\n_{L^2(\R_+;H)}\,\Big\n_{\tmbp} ^2
\\ &  = 
\int_0^\infty\int_0^{\frac{t}{2}} 
\n T_{t-s}f(s,\cdot)\n_{L^2(H)}^2\,ds \frac{dt}{t^{\beta}}
\\ & \lesssim    \int  _{0} ^{\infty} \int _{0} ^{\frac{t}{2}} 
\frac{s}{t} \|f(s,\cdot)\|_{L^2(H)}^{2} \,\frac{ds}{s}\frac{dt}{t^{\beta}}
\\ & = \int _{0} ^{\frac{1}{2}} \int  _{0} ^{\infty} 
  \|f(tu,\cdot)\|_{L^2(H)}^{2}\,\frac{dt}{t^{\beta}} \, du
\\ & = \int _{0} ^{\frac{1}{2}} \int  _{0} ^{\infty} 
u^{\beta-1}  \|f(t,\cdot)\|_{L^2(H)}^{2}\,\frac{dt}{t^{\beta}} \, du
\\ & \lesssim  \int _{0} ^{\infty} 
 \|f(t,\cdot)\|_{L^2(H)}^{2} \,\frac{dt}{t^{\beta}}
\\ & = \|f\|_{\tmbpH}^2.
\end{align*}

\medskip
{\em Step 2} --
Next we consider the case $p=1$. 
We will prove that there exists a constant $C_{\b}\ge 0$ such that 
for every atom $a$ we have
\begin{align}\label{eq:atom}
\n M a\n
_{\Tmb(L^2(\R_+;H))} \leq C_{\b}.
\end{align}
An appeal to Lemma \ref{lem:bdd-on-atoms} will then finish the proof. 

Fix an atom $a$  
supported in $(0,r^2)\times B(x_0,r)$, and 
define the following sets:
\begin{equation*}
\begin{split}
C_{0}:=&\{(t,x)\in (0,\infty)\times \mathbb{R}^{n} \;;\; |x-x_0| < 2r \; 
\text{and} \; t<(2r)^2\},\\
C_{j}:=&\{(t,x)\in (0,\infty)\times \mathbb{R}^{n} \;;\; 2^{j}r\leq |x-x_0| < 2^{j+1}r \; 
\text{and} \; t<(2^j r)^2\}, \ j\geq1,\\
C'_{j}:=&\{(t,x)\in (0,\infty)\times \mathbb{R}^{n} \;;\; |x-x_0| < 2^{j+1}r \; 
\text{and} \; (2^j r)^2\leq t<(2^{(j+1)}r)^2\}, \  j\geq1.
\end{split}
\end{equation*}
We write 
\begin{align*}
 \ &
\big\n \one_{\{\frac{t}{2} \geq s\}}
T_{t-s}f(s,\cdot)\big\n_{L^2(\R_+;H)}
\\ & \ \ = \Big(\one_{C_0}(t,x)+\sum  _{j \geq 1} (\one_{C_j}(t,x)+\one_{C'_j}(t,x))\Big)
\Big(\int  _{0} ^{\frac{t}{2}}
\n [T_{t-s}a(s,\cdot)](x)\n_H^{2}\,ds\Big)^\frac12
\end{align*} 
and, using Lemma \ref{lem:Tp2}, show that each term is in $\Tmb$ with suitable bounds.

\noindent {\it 1. Estimate on $C_{0}$:} Estimating as before, using 
the uniform boundedness of $({t}^\frac12 T_{t})_{t>0}$ we have
\begin{align*}
\ & \int  _{B(x_0,2r)} \int  _{0} ^{4r^2} \int  _{0} ^{\frac{t}{2}}  
\n [T_{t-s}a(s,\cdot)](x)\n_H^{2}\, ds\,\frac{dt}{t^{\beta}}\,dx
\\ &\qquad \lesssim \int  _{0} ^{4r^2} \int  _{0} ^{\frac{t}{2}} \frac{s}{t}
\|a(s,\cdot)\|_{L^2(H)}^{2} \,\frac{ds}{s}\,\frac{dt}{t^{\beta}}
\\ &\qquad = \int  _{0} ^{2r^2}\int_{2s}^{4r^2} \frac{s}{t}
 \|a(s,\cdot)\|_{L^2(H)}^{2}\,\frac{dt}{t^{\beta}}\, \frac{ds}{s} 
\\ &\qquad \lesssim \int  _{0} ^{\infty} s^{1 -\beta} 
 \|a(s,\cdot)\|_{L^2(H)}^{2}\, \frac{ds}{s} 
\\ & \qquad \lesssim r^{-n}.
\end{align*}
Therefore, by Lemma \ref{lem:Tp2},
\begin{align*}
\ & \Big\|(t,x)\mapsto \one_{C_0}(t,x)\Big( \int  _{0} ^{\frac{t}{2}} 
\n T_{t-s}[a(s,\cdot)](x)\n_H ^{2}\,ds\Big)^{\frac{1}{2}}\Big\|_{\Tmb}  \lesssim 1.
\end{align*}

\noindent {\it 2. Estimate on $C_{j}$:} 
Let us write $\tilde B_j = {B(x_0,2^{j+1}r)}\setminus {B(x_0,2^{j}r)}$.
Using the $L^1$-$L^2$ off-diagonal estimates, and the fact that $a$ is supported
on $(0,r^2)\times B(x_0,r)$, we have 
\begin{align*}
& \int  _{\tilde B_j} \int  _{0} ^{4^{j}r^{2}} \int  _{0} ^{\frac{t}{2}} 
\n T_{t-s}[a(s,\cdot)](x)\n_H^{2}\,ds\,\frac{dt}{t^{\beta}}\,dx\\
& = \int  _{0} ^{4^{j}r^{2}} \int  _{0} ^{\frac{t}{2}} \int  _{\tilde B_j}   
\n{t}^\frac12T_{t-s}[a(s,\cdot)\one_{B(x_0,r)}(\cdot)](x)\n_H^{2}\,dx\,ds\,\frac{dt}{t^{1+\beta}}\\
& \lesssim \int  _{0} ^{4^{j}r^{2}} \int  _{0} ^{\frac{t}{2}}  t^{-\frac{n}{2}}
\exp(-c{4^{j}r^{2}}/{t})\|a(s,\cdot)\|_{L^1(\R^n;H)}^{2}\, ds\,\frac{dt}{t^{1+\beta}}\\
& =
\int  _{0} ^{4^{j}r^{2}} \Big(\int  _{0} ^{\frac{t}{2}}  s^{\beta}\|a(s,\cdot)\|_{L^1(\R^n;H)} ^{2} 
\,\frac{ds}{s^{\beta}}\Big)  
t^{-\beta-\frac{n}{2}}
\exp(-c{4^{j}r^{2}}/{t})\,\frac{dt}{t}
\\
&\lesssim \Big(r^{n}\int  _{0} ^{r^2} r^{2\beta}\|a(s,\cdot)\|_{L^2(H)} ^{2} \,\frac{ds}{s^{\beta}}\Big) 
 \int  _{0} ^{4^{j}r^{2}}  t^{-\beta-\frac{n}{2}}
\exp(-c{4^{j}r^{2}}/{t})\,\frac{dt}{t}
\\ & \lesssim 
r^{2\beta}
(4^{j}r^{2})^{-\beta-\frac{n}{2}}
\\ & = 
r^{-n} 4^{-j(\beta+\frac{n}{2})}.
\end{align*}
Therefore, by Lemma \ref{lem:Tp2},
\begin{align*}
\  \Big\|(t,x)\mapsto \one_{C_j}(t,x)&\Big( \int  _{0} ^{\frac{t}{2}} 
\n T_{t-s}[a(s,\cdot)](x)\n_H ^{2}\,ds\Big)^{\frac{1}{2}}\Big\|_{\Tmb} 
\\ &  \lesssim   (2^{j+1}r)^{\frac{n}{2}}r^{-\frac{n}{2}}2^{-j(\beta+\frac{n}{2})}
\lesssim 2^{-j\beta}.
\end{align*}

\noindent {\it 3. Estimate on $C'_{j}$:} 
Using the $L^1$-$L^2$ off-diagonal bounds, we have 
\begin{align*}
\ & \int  _{B(x_0,2^{j+1}r)} \int  _{(2^j r)^2} ^{(2^{j+1}r)^2} \int  _{0} ^{\frac{t}{2}}  
\n T_{t-s}[a(s,\cdot)](x)\n_H^{2}\, ds\, \frac{dt}{t^{\beta}}\, dx
\\ &\qquad  =  \int  _{(2^j r)^2} ^{(2^{j+1}r)^2} \int  _{0} ^{\frac{t}{2}} \int  _{B(x_0,2^{j+1}r)}\n {t}^\frac12 T_{t-s}[a(s,\cdot)\one_{B(x_0,r)}(\cdot)](x)\n_H ^{2}\, dx\, ds\,\frac{dt}{t^{1+\beta}}
\\ & \qquad \lesssim \int  _{(2^j r)^2} ^{(2^{j+1}r)^2} \int  _{0} ^{\frac{t}{2}}  
t^{-\frac{n}{2}}
\|a(s,\cdot)\|_{L^1(\R^n;H)}^{2} \,ds\,\frac{dt}{t^{1+\beta}}\\
& \qquad \lesssim \Big(r^{n}\int  _{0} ^{r^2} r^{2\beta}\|a(s,\cdot)\|_{L^2(H)} ^{2}\,\frac{ds}{s^{\beta}}\Big) 
 \int   _{(2^j r)^2} ^{(2^{j+1}r)^2}  t^{-\beta-\frac{n}{2}}
\,\frac{dt}{t} 
\\ & \qquad \lesssim r^{2\beta}
(4^{j}r^{2})^{(-\beta -\frac{n}{2})}
\\ & \qquad = 
r^{-n} 4^{-j(\beta+\frac{n}{2})}.
\end{align*}
Therefore, by Lemma \ref{lem:Tp2}, 
\begin{align*}
\ & \|(t,x)\mapsto \one_{C'_j}(t,x) \Big(\int  _{0} ^{\frac{t}{2}} 
\n T_{t-s}[a(s,\cdot)](x)\n_H^{2}\, ds\Big)^{\frac{1}{2}}\|_{\Tmb} \lesssim 2^{-j\beta}.
\end{align*}

\noindent {\it 4. Collecting the estimates:} 
Summing the above three estimates over $j$ gives \eqref{eq:atom}.
\end{proof}

\begin{proof}[Proof of Theorem \ref{thm:main}]
For $A$ as in Example \ref{ex:divform}, $(t^{\frac12}\nabla e^{-tA})_{t \geq 0}$ has $L^1$-$L^2$ off-diag\-onal bounds. 
By the solution of Kato's square root problem \cite{ahlmt},
\begin{align}\label{eq:ksqrt}
\|A^{\frac{1}{2}}u\|_{L^{2}} \eqsim \|\nabla u\|_{L^{2}(\R^n)}.
\end{align}
Moreover, $A$  is maximal accretive on $L^2$ and therefore
the bounded analytic semigroup generated by $-A$ is contractive on $L^2$.
By Proposition \ref{prop:beta}, the mapping $g \mapsto A^{\frac{1}{2}} S \diamond g$ (and hence the mapping 
$g \mapsto \nabla S \diamond g$) extends to a bounded operator from
$L^2_\F (\O;\tmbH)$ to $L^2_\F(\O;\tmb)$ (respectively, from $L^2_\F (\O;\tmbH)$ to $L^2_\F(\O;\tmb(\R^n))$). 
The result thus follows from Theorem \ref{thm:abstract-q}.
\end{proof}

\begin{remark}
The results in this section are stated in way that is suitable for applications to the divergence form elliptic operators from Example \ref{ex:divform}. 
Introducing an homogeneity parameter $m$ as in \cite{AKMP,AMP}, one can prove analogue results suitable for the study of differential operators of order $m$. We have chosen not to do so here to make the paper more readable.
\end{remark}

\section{An application to SPDE}\label{sec:SPDE}

In this section we apply our results to prove conical stochastic maximal $L^p$-regularity
for a class of nonlinear stochastic evolution equations.
We consider the problem
\begin{equation}\label{eq:nonlinear} \left\{
\begin{aligned}
du(t,x) &= {\rm div}\, a(x) \nabla u(t,x) \,dt + b(\nabla u(t,x))\,dW(t), && t\ge 0, \ x\in\R^n, \\ 
u(0,x) &= u_0(x), && x\in\R^n. 
\end{aligned}
\right.
\end{equation}
Here, $W$ is an $\F$-Brownian motion relative to some given filtration $\F$, 
the function $a: \R^n \to M^n(\R)$ is bounded and measurable, the operator $A=-{\rm div}\,a \nabla$ satisfies the ellipticity conditions of Example \ref{ex:divform},
the function $b: \R^n \to \R$ is globally Lipschitz continuous, with Lipschitz constant $L_b$, 
and satisfies 
\begin{align}\label{eq:ass-b}
|b(x)|\le C_b|x|, \quad x\in\R^n.
\end{align} The initial value  
$u_{0}:\R^n \to \R$ belongs to $\Dom_{p}(A^{\frac{\beta}{2}})$, 
the domain of the $L^p$-realisation of $A^{\frac{\beta}{2}}$, for some $0<\beta<1$. 
At the expense of making the arguments more involved, we could also add an additional 
semilinear term and consider cylindrical Brownian motions,
but in order to bring out the principles more clearly we have chosen to consider a simple
model problem. 

In order to arrive at a notion of solution we proceed as follows.
At least formally, we reformulate \eqref{eq:nonlinear} as an abstract initial value problem 
as follows:

\begin{equation*} \left\{
\begin{aligned}
dU(t) + A U(t) \,dt & = B(\nabla U(t))\,dW(t), && t\ge 0,\\ 
 U(0) &= u_0.
\end{aligned}
\right.
\end{equation*}
Here $A= -{\rm div}\, a(\cdot) \nabla$
and $$ (B(u))(t,x) := b(u(t,x))$$ is the Nemytskii operator associated with $b$. 
Denoting by $L_\F^p(\Omega;\tbp)$ the closed subspace of all $\F$-adapted
processes belonging to $L^p(\Omega;\tbp)$, 
it is immediate from \eqref{eq:ass-b} that $B$ maps $L_\F^p(\Omega;\tbp)$ into itself,
and the Lipschitz continuity of $b$ implies that of $B$, with the same constant.

In order to be consistent with the terminology used in the Introduction,
at least formally, a ``mild solution'' should be an adapted ``process'' $U$ 
that ``satisfies'' the variation of constants equation
\begin{equation}\label{eq:abstr-nonlinear} 
U(t) = S(t)u_0 + \int_0^t S(t-s)B(\nabla U(s))\,dW(s), 
\end{equation}
where $S$ is the bounded analytic $C_0$-semigroup generated by $-A$. 
By {\em conical stochastic maximal $L^p$-regularity} we then understand that 
the ``gradient'' of  $U$ is in $L^p(\Omega;\tbp(\R^n))$. 
In order to make this rigorous,
we formally apply $\nabla$ to both sides of the identity \eqref{eq:abstr-nonlinear}
 and, again formally,  substitute $V = \nabla U$ to arrive at
the equation 
\begin{equation}\label{eq:fp}
 V  
= \nabla S(\cdot)u_0 + \nabla S\diamond B(V).
\end{equation}
\begin{definition}
The problem \eqref{eq:nonlinear} is said to have {\em conical stochastic maximal $L^p$-regularity with weight $\beta$}
if for every initial value $u_0\in \Dom_p(A^\frac{\b}{2})$ 
there exists a unique element $V$ in $L^p(\Omega;\tbp(\R^n))$ such that
\eqref{eq:fp} holds.
\end{definition}

Thus we solve for $V$, rather than for $U$. The above heuristic discussion shows that we may think
of $V$ as the ``gradient of the mild solution of \eqref{eq:abstr-nonlinear}''.

\begin{remark} 
If $V$ solves \eqref{eq:fp}, then, at least formally, we have
$V = \nabla U$ with $U := S(\cdot)u_0 + S \diamond B(V).$  
This definition makes sense provided stochastic convolution on the right-hand side is well defined
in one way or the other. We are not asserting, however, that this process 
is a ``mild solution'' to \eqref{eq:abstr-nonlinear} in any rigorous sense.
\end{remark}

In the next lemma, which is of interest in its own right, we denote by 
$S$ the semigroup generated by $-A$ on $L^p$. 

\begin{lemma}\label{lem:u0} There exists $\beta_{0}\in (0,1]$ with the following property. 
If $p \in (1,\infty)$ and $0<\beta<1$ are such that the pair $(\frac{1}{p},\beta)$ belongs to 
the interior of the planar polytope with vertices 
$(0,0), (0, \beta_{0}), (\frac12, 1), (1,1), (1,0)$, then
for all $u_0\in \Dom_{p}(A^{\frac{\beta}{2}})$ the function
$(t,x)\mapsto \nabla S(t)u_0(x)$ belongs to $\tbp(\R^n)$.  
\end{lemma}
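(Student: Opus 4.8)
\textbf{Proof proposal for Lemma \ref{lem:u0}.}

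The plan is to reduce the statement to a tent-space bound for the family $(t^{1/2}\nabla S(t))_{t>0}$ acting on the initial datum, exploiting the Kato estimate \eqref{eq:ksqrt} together with the off-diagonal bounds for this family from Example \ref{ex:divform}. First I would rewrite the quantity to be estimated: since $u_0 \in \Dom_p(A^{\beta/2})$, we set $v_0 := A^{\beta/2}u_0 \in L^p$, so that $\nabla S(t)u_0 = t^{-1/2}\cdot t^{1/2}\nabla S(t) A^{-\beta/2} v_0$. Thus the claim is that the sublinear operator $v_0 \mapsto \big((t,x)\mapsto t^{1/2}\nabla S(t)A^{-\beta/2}v_0(x)\big)$ maps $L^p$ into $\tbp(\R^n)$, i.e. into the parabolic tent space with weight $\beta$ measured in $t^{-\beta}\,dt/t$ — after absorbing the $t^{-1/2}$ and the parabolic scaling, the weight exponent and the $A^{-\beta/2}$ smoothing have to match up so that the scaling is the correct one for the $T_2^{p,2}$-norm. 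The natural endpoints are $p=2$ and $p=1$ (together with $p=\infty$, which would be handled via $BMO$-type/Carleson estimates), and the polytope in the statement is exactly the interpolation region one obtains from the admissible endpoint estimates, which is why $\beta_0$ appears as the $p\to 1$ threshold where the $L^1$-$L^2$ off-diagonal decay starts to be needed.

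The $L^2$ endpoint should be the soft one: by \eqref{eq:T22} the $T_2^{2,2}$-norm with weight $\beta$ coincides with the weighted $L^2(\R_+,t^{-\beta}\,dt;L^2)$-norm, so one needs $\int_0^\infty \|t^{1/2}\nabla S(t)u_0\|_{L^2}^2\, t^{-\beta}\,dt/t \lesssim \|u_0\|_{\Dom_2(A^{\beta/2})}^2$; using \eqref{eq:ksqrt} and $\|t^{1/2}\nabla S(t)u_0\|_{L^2}\eqsim \|t^{1/2}A^{1/2}e^{-tA}u_0\|_{L^2}$ this is the standard square-function characterisation of $\Dom_2(A^{\beta/2})$ via the analytic semigroup, valid since $A$ is sectorial and self-adjoint-like (maximal accretive) on $L^2$. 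For the $L^1$ endpoint I would argue on atoms: a $T_2^{1,2}$ (or Hardy-space $H^1$) atomic decomposition of the image, or dually, test against the Carleson measure / $T_2^{\infty,2}$ side; concretely, one takes $u_0$ an $H^1$-atom (or uses the $H^1$-$L^1$ relation for the range of $A^{-\beta/2}$) supported in a ball $B(x_0,r)$, splits space-time into the Whitney-type annuli $C_j$ as in the proof of Theorem \ref{thm:abstract-q}, and uses the $L^1$-$L^2$ off-diagonal bounds for $(t^{1/2}\nabla e^{-tA})_{t>0}$ to get Gaussian-in-$4^jr^2/t$ decay; combined with Lemma \ref{lem:Tp2} applied to each localized piece this gives a summable-in-$j$ bound, yielding the $T_2^{1,2}$ estimate with the precise power of $r$ that makes the constant $\beta$-independent. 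The general case in the interior of the polytope then follows by complex interpolation of tent spaces (using the identification of $\tbp$ with a closed subspace of $L^p(L^2(\ldots))$ for $1<p<\infty$ and of $H^1(L^2(\ldots))$ for $p=1$ recalled in Subsection \ref{subsec:conical}) interpolated against the scale of domains $\Dom_p(A^{\theta/2})$.

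I expect the main obstacle to be bookkeeping the scaling and weight exponents so that the $T_2^{p,2}$-norm with weight $\beta$, the fractional power $A^{-\beta/2}$, and the parabolic dilation $t\mapsto t^{1/2}$ all fit together — in particular pinning down the exact threshold $\beta_0$ and checking that the five listed vertices $(0,0),(0,\beta_0),(\tfrac12,1),(1,1),(1,0)$ are precisely the convex hull of the $(1/p,\beta)$-pairs for which the endpoint estimates close. A secondary technical point is the $p=1$ endpoint: one must be careful that $A^{-\beta/2}$ maps $H^1$-atoms to functions whose semigroup-square-function lies in $T_2^{1,2}$ with the right normalisation, and here the genuine $L^1$-$L^2$ off-diagonal bounds (not merely $L^2$-$L^2$) are essential, exactly as flagged in Example \ref{ex:divform}. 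Once both endpoints and the interpolation are in place, the conclusion $\nabla S(\cdot)u_0 \in \tbp(\R^n)$ is immediate.
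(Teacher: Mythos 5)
Your $p=2$ endpoint is essentially the paper's Step 1 (Kato's estimate \eqref{eq:ksqrt} plus the square-function/$H^\infty$-calculus characterisation), but beyond that the plan has a genuine gap: you have located the threshold $\beta_0$ at the wrong endpoint and attributed it to the wrong mechanism. In the polytope, the vertex $(0,\beta_0)$ has abscissa $\frac1p=0$, so it constrains the regime $p\to\infty$, not $p\to1$; for $1<p\le 2$ the lemma holds for the full range $0<\beta<1$. In the paper the restriction $\beta<\beta_0$ arises in the $L^\infty\to T^{\infty,2}_{2,1}$ Carleson-measure estimate for $(t,y)\mapsto t^{\frac{1-\beta}{2}}\nabla A^{-\frac{\beta}{2}}e^{-tA}f(y)$: after splitting $f=f_0+f_1$, the far part is controlled via the representation $\nabla A^{-\frac{\beta}{2}}e^{-tA}f_1=C\int_0^\infty s^{\frac{\beta}{2}}\nabla (sA)e^{-(s+t)A}f_1\,\frac{ds}{s}$ together with a weighted $L^2(B)$ bound for $\nabla_x$ of the kernel of $Ae^{-sA}$ with decay exponent $\gamma_0\in(n-2,n]$ coming from elliptic regularity (cf.\ \cite[Lemma 33]{at}), and $\beta_0=\frac12(\gamma_0-n+2)$. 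As Remark \ref{rem:beta0} stresses, $\beta_0$ is an elliptic-regularity quantity that can be arbitrarily small; it cannot come from $L^1$-$L^2$ off-diagonal bounds, which hold for \emph{every} real divergence-form operator and would force $\beta_0=1$ if your mechanism were the right one. Your proposal contains no argument for $2<p<\infty$ beyond the phrase ``BMO-type/Carleson estimates'', so the genuinely hard part of the lemma is missing.

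The range $1<p<2$ is also not secured by your atom-plus-off-diagonal sketch. The paper does not interpolate from an $H^1$ endpoint there; it treats each $p\in(1,2)$ directly via the first-order operator $D=\left(\begin{smallmatrix}0&-\mathrm{div}\,a\\ \nabla&0\end{smallmatrix}\right)$, writing $t^{\frac{1-\beta}{2}}\nabla S(t)u_0=\psi(\sqrt t D)(v_0,0)$ with $\psi(z)=z(z^2)^{-\frac{\beta}{2}}e^{-z^2}$, changing $\psi$ to $\tilde\psi(z)=ze^{-z^2}$ by \cite[Theorem 7.10]{HNP}, and then invoking \cite[Theorem 3.1]{ahm} with $p_-(A)=1$. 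If you insist on an atomic route you would need atoms adapted to $A$ (classical $H^1$ atoms do not recover $L^p$, $1<p<2$, by any routine interpolation against $\Dom_p(A^{\frac{\beta}{2}})$), plus conical square-function bounds for $\nabla e^{-tA}(tA)^{-\frac{\beta}{2}}$ near $p=1$ --- which is exactly the content of the machinery you are trying to bypass. Finally, even with correct endpoints, interpolation in $p$ at fixed $\beta$ only yields the rectangle $\beta<\beta_0$, $p>2$; reaching the interior of the polytope up to the segment joining $(0,\beta_0)$ and $(\frac12,1)$ requires interpolating in the weight as well, which the paper does by Stein's complex interpolation with control of the constants in $\Im\beta$ (Step 4), a point absent from your outline.
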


\begin{figure}
\begin{center}
\includegraphics{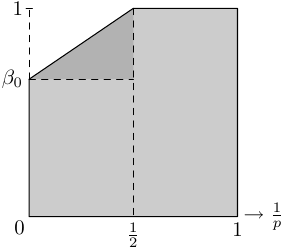}
\medskip
\begin{tabbing}
\hskip1.7cm The polytope of Lemma \ref{lem:u0}. Shaded in gray are the regions \\
\hskip1.7cm of steps 3 and 2; shaded dark is the region of step 4.
\end{tabbing}
\end{center}
\end{figure}

\begin{remark}\label{rem:beta0}
 The constant $\beta_0$ is related to elliptic regularity theory, and can be
arbitrary small. For certain specific classes of operators $A$, however, 
it is known that one can take $\beta_0 = 1$, viz. in the case of constant coefficients 
(in this case, the arguments can be simplified using standard
Littlewood-Paley estimates), and in the case of continuous periodic coefficients with common periods
(see \cite[Section 5.4]{A}, \cite[page 139]{at}, and references therein).
\end{remark}

\begin{remark}
For $\beta=0$, the lemma holds for $1<p<\infty$ using \cite[Theorem 3.1]{ahm} and 
$p_{-}(A)=1$ (the number $p_{-}(A)$ being defined in \cite{ahm}).  The argument given here for 
$1<p<2$ and $0<\beta<1$ 
applies to $\beta=0$ as well, and gives a different proof of this case. 
\end{remark}

\begin{proof}
For this proof, we use complexified spaces.  Let $v_{0} = A^{\frac{\beta}{2}}u_{0}$.
The proof proceeds in four steps.

{\em Step 1} --
In the case $p=2$ and $0<\beta<1$, one has 
\begin{align*}
\|(t,y) \mapsto \nabla S(t)u_{0}(y)\|^2_{\tbp(\C^n)} 
&\eqsim  \int_0^\infty\int_{\R^n} |\nabla e^{-tA} u_{0}|^2 \,dy\, \frac{dt}{t^\beta}
\\
& \eqsim \int_0^\infty\int_{\R^n} |A^{\frac 1 2} e^{-tA} u_{0}|^2 \,dy\, \frac{dt}{t^\beta}
\\
&
=\int_0^\infty\int_{\R^n} |(tA)^{\frac {1-\beta} 2} e^{-tA} v_{0}|^2 \,dy\, \frac{dt}{t}
\\
&
\eqsim \|v_{0}\|_{2}^2
\end{align*}
where the second equivalence uses the equivalence 
$\n A^{\frac 1 2}u\n_{L^2}\eqsim \n \nabla u\n_{L^2(\R^n)}$
(cf. \eqref{eq:ksqrt}), and
the last inequality uses the boundedness of the $H^{\infty}$-functional calculus of $A$ in $L^{2}$ 
(a result going back to \cite{mc}, see also \cite[Theorem 11.9]{KW}).

{\em Step 2} --
In the case $1<p<2$ and $0<\beta<1$, we use the first-order approach of \cite{HNP}.
Consider the operator $$D = \left( \begin{array}{cc} 0 & -\textrm{div}\, a(\cdot) \\ \nabla & 0 \end{array} \right)$$ 
acting on $L^{2}\oplus L^{2}(\C^{n})$. This operator is bisectorial with angle 
$\eta\in (0,\frac{\pi}{2})$,   
its resolvent satisfies $L^{2}$-$L^{2}$ off-diagonal estimates with arbitrary large decay and  
has a bounded $H^{\infty}$ functional calculus. 
The operator $A$ is sectorial of angle $\omega\in (0, \frac \pi 2)$ and has the same properties. 
In particular, $-A$ generates a bounded analytic 
 $C_0$-semigroup on $L^{2}\oplus L^{2}(\C^{n})$ (see \cite{akm}). 

Note that $$t^{\frac{1-\beta}{2}} \nabla S(t)u_{0}
= t^\frac12\nabla e^{-tA}(tA)^{-\frac{\beta}{2}}v_{0}$$ and 
$$
(0, t^\frac12\nabla e^{-tA}(tA)^{-\frac{\beta}{2}}v_{0})= \psi(\sqrt{t}D)(v_{0},0)
$$
for the function
$\psi \in H^{\infty}(S_{\theta})$ defined by $$\psi(z) = z(z^{2})^{-\frac{\beta}{2}}e^{-z^{2}}$$
(where $\theta \in (\omega,\frac{\pi}{2})$, 
$S_{\theta} = \Sigma_{\theta}\cup (-\Sigma_{\theta})\cup\{0\}$, and $\Sigma_{\theta} = 
\{z \in \C \backslash \{0\} \; |\arg(z)| < \theta\}$).

Therefore, by \cite[Theorem 7.10]{HNP}, one has the following equivalence of norms in the tent spaces 
upon changing $\psi$ in the appropriate class (more precisely, as $1<p<2$, 
it is enough that $\psi$ has slight decay at 0 and sufficiently large decay at $\infty$: 
see the condition (d) in Proposition 7.5 there and note that we have cotype $2$), 
$$
 \|(t,y) \mapsto \psi(\sqrt{t}D)(v_{0},0)(y)\|_{T^{p,2}_{2,1}(\C^{n+1})} \eqsim
 \|(t,y) \mapsto \widetilde{\psi}(\sqrt{t}D)(v_{0},0)(y)\|_{T^{p,2}_{2,1}(\C^{n+1})} 
 $$
for $\widetilde{\psi}(z) = ze^{-z^{2}}$.
As
$$
\widetilde{\psi}(\sqrt{t}D)(v_{0},0)=(0, t^{\frac12} \nabla S(t)v_{0})
$$
 we have shown that
\begin{equation*}
\begin{split}
\|(t,y) \mapsto \nabla S(t)u_{0}(y)\|_{\tbp(\C^n)} 
&= \|(t,y) \mapsto t^{\frac{1-\beta}{2}}\nabla S(t)A^{-\frac{\beta}{2}}v_{0}(y)\|_{T^{p,2}_{2,1}(\C^n)}\\
&\eqsim \|(t,y) \mapsto t^{\frac12} \nabla S(t)v_{0}(y)\|_{T^{p,2}_{2,1}(\C^n)}
\\ & = \|(t,y) \mapsto \nabla S(t)v_{0}(y)\|_{T^{p,2}_{2,0}(\C^n)}.
\end{split}
\end{equation*}
Using \cite[Theorem 3.1]{ahm}, this gives
$$
\|(t,y) \mapsto \nabla S(t)u_{0}(y)\|_{\tbp(\C^n) } \lesssim \|v_{0}\|_{L^{p}} 
= \|A^{\frac{\beta}{2}}u_{0}\|_{L^{p}}.
$$

{\em Step 3} --
We turn to the case $2<p<\infty$ and $0<\beta<\beta_{0}$, where $\beta_{0} \in (0,1]$ 
will be determined in a moment. 
We use the fact that  the spaces $\tbp$ interpolate (by either the complex or the real method, 
see e.g. \cite{CMS}) between $p=2$ and $p=\infty$. For $p=\infty$, $\tbp$ is defined as the 
space of all locally square integrable functions such that  the Carleson measure condition
$$
\int_{0}^{r^2}\int_{B} |g(t,y)|^2 \,dy\, \frac{dt}{t^\beta} \le C r^n 
$$
holds whenever $B$ is a ball of radius $r>0$, with $C$ independent of $B$. 

We claim that there exist $\beta_{0}\in (0,1]$ such that for all $0<\beta<\beta_{0}$ 
and $f \in L^\infty$, 
$$
\|(t,y) \mapsto t^{\frac{1-\beta}{2}}\nabla A^{-\frac{\beta}{2}}e^{-tA}f(y)\|_{T^{\infty,2}_{2,1}(\C^n)} 
\lesssim \|f\|_{L^\infty}. 
$$
Assuming the claim, and using the $p=2$ result for all $0<\beta<1$, one concludes, by interpolation, 
that for
$0<\beta<\beta_{0}$ and $p \in [2,\infty)$, 
$$
\|(t,y) \mapsto t^{\frac{1-\beta}{2}}\nabla A^{-\frac{\beta}{2}}e^{-tA}v_{0}(y)\|_{T^{p,2}_{2,1}(\C^n)} 
\lesssim \|v_{0}\|_{L^p},
$$
i.e.,
$$
\|(t,y) \mapsto \nabla S(t)u_{0}(y)\|_{\tbp(\C^n) }  \lesssim \|A^{\frac{\beta}{2}}u_{0}\|_{L^{p}}.
$$

We now prove the claim. 
The argument is scale and translation invariant (up to changing the matrix 
$a(x)$ to $a(rx+x_{0})$ which does not change the ellipticity constants), so we assume that $B$ is the 
unit ball. The same proof works in the general case, and produces the required factor $r^{n}$.
Let $f$ be a bounded measurable function with compact support. 
Let $f_{0}=f1_{2B}$ and $f_{1} = f - f_{0}$. 
By the $p=2$ result, one has
$$
\int_{0}^{1}\int_{B} |t^{\frac{1-\beta}{2}}\nabla A^{-\frac{\beta}{2}}e^{-tA}f_{0}(y)|^2 \,dy \,\frac{dt}{t}
\lesssim \|f_{0}\|_{L^2}^2 \lesssim \|f\|_{L^\infty}^2. 
$$
To control the other term, we use the representation formula
$$
\nabla A^{-\frac{\beta}{2}}e^{-tA}f_{1}= C \int_{0}^\infty 
\nabla s^{\beta/2} (sA) e^{-(s+t)A}f_{1} \frac{ds}{s},
$$
for some constant $C>0$ independent of $f_1$, which holds in $L^2(B;\C^n)$
thanks to the following estimate 
on the kernel $\tilde K_{s}(x,y)$  of $Ae^{-sA}$.
There are constants $c,C>0$ and $\gamma_{0}>0$ in $ (n-2, n]$ (for $n=1$, one has $\gamma_{0}=1$) 
such that 
$$
\forall y \not \in 2B \quad
\bigg(\int_{B}  |\nabla_{x} \tilde K_{s+t}(x,y)|^2\, dx\bigg)^{\frac12} 
\le \frac{C}{(s+t)^{\frac{n}{4} + \frac32+ \frac{\gamma_{0}}{4}}} 
\cdot \exp\Big(\!-\frac{c|y|^2}{s+t}\Big).$$
This estimate is proven in  \cite[Lemma 33, p. 139]{at}  for $e^{-tA}$ but the same proof 
(based on estimates for the kernel of the resolvent) gives the estimate
for $Ae^{-sA}$.
Define $\beta_{0} =\frac{1}{2}( \gamma_{0}-n+2)\in (0,1]$. By 
the kernel estimate and the fact that $f_{1}$ is supported away from $2B$, we have 
\begin{equation*}
\begin{split}
& \| \nabla A^{-\frac{\beta}{2}}e^{-tA}f_{1}\|_{L^2(B;\C^n)} \\
& \qquad 
\eqsim \Big\n\int_{0}^\infty 
\nabla s^{\frac{\beta}{2}} (sA) e^{-(s+t)A}f_{1} \frac{ds}{s}\Big\n_{L^2(B;\C^n)}
          \\
& \qquad \le  \int_{0}^\infty s^{1+\frac{\beta}{2}}  \int_{|y|\ge 2}
\n \nabla\tilde K_{s+t}(\cdot, y)f_{1}(y) \Big\n_{L^2(B;\C^n)}\,dy\,\frac{ds}{s} 
          \\
& \qquad \lesssim  \int_{0}^\infty  \int_{|y|\ge 2}\frac{s^{1+\frac{\beta}{2}}}
{(s+t)^{\frac{n}{4} + \frac32+ \frac{\gamma_{0}}{4}}} 
\cdot \exp\Big(\!\!-\!\frac{c|y|^2}{s+t}\Big)
|f_1(y)|\,dy\, \frac{ds}{s}\\
& \qquad \lesssim \|f_{1}\|_{L^\infty} \int_{|y|\ge 2} \frac{ 1}{|y|^{n+\beta_{0}-\beta}}\,dy \\
& \qquad \lesssim \|f_{1}\|_{L^\infty},
\end{split}
\end{equation*}
for $0<\beta<\beta_{0}$. Therefore
$\int _{0} ^{1}  \|t^{\frac{1-\beta}{2}} \nabla A^{-\frac{\beta}{2}}e^{-tA}f_{1}\|_{L^{2}(B;\C^n)} ^{2} 
\frac{dt}{t} \lesssim  \|f\|_{L^\infty}^2.$

{\em Step 4} -- 
The arguments so far show that the $\tbp(\C^n)$ estimate holds for $0< \Re \beta <1$ and $p=2$, 
with controlled growth in terms of $\Im \beta$. It also shows that the $\tbp(\C^n)$ estimate 
holds for $0< \Re \beta <\beta_{0}$ and $p=\infty$ again with controlled growth in $\Im \beta$. 
Using Stein's complex interpolation, one also gets the conclusion of the lemma for all pairs $(p,\beta)$ 
such that the point $(\frac1p, \Re \beta)$ is inside the planar polytope with vertices 
$(0,0), (0, \beta_{0}), (\frac12, 1), (\frac12, 1)$, $(0,1)$. We leave the details to the reader.
\end{proof}

\begin{theorem}\label{thm:SE} 
Let $p \in (1,\infty)$ and $0<\beta<1$ be such that the pair $(\frac{1}{p},\beta)$ belongs to 
the interior of the planar polytope with vertices 
$(0,0), (0, \beta_{0}), (\frac12, 1), (1,1), (1,0)$, where $\beta_{0}$ is defined in Lemma \ref{lem:u0}.
Suppose that $ K_{p,\beta} L_b <1,$ where $K_{p,\beta}$ 
is the norm of the mapping $g\mapsto \nabla S\diamond g$ from $L^p(\O;\tbp)$ to $L^p(\O;\tbp(\C^n))$ and  
$L_b$ is the Lipschitz constant of $b$. Then 
the problem \eqref{eq:nonlinear} has conical stochastic maximal $L^p$-regularity 
with weight $\beta$,  i.e., \eqref{eq:fp} holds 
for all initial values $u_0\in \Dom_{p}(A^{\frac{\beta}{2}})$.
\end{theorem}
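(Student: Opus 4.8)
The plan is to realise $V$ as the unique fixed point of the affine mapping
$$\Phi(V) := \nabla S(\cdot) u_0 + \nabla S \diamond B(V)$$
on the closed subspace $L^p_\F(\Omega;\tbp(\R^n))$ of $\F$-adapted processes in $L^p(\Omega;\tbp(\R^n))$, and to invoke the Banach fixed point theorem; the relevant contraction constant will turn out to be exactly $K_{p,\beta}L_b$.

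First I would record that, since $W$ is a scalar Brownian motion in this section, Theorem \ref{thm:main} applies with $H=\R$, and that by density of the adapted simple processes in $L^p_\F(\Omega;\tbp)$ together with the a priori bound of that theorem, the map $g\mapsto\nabla S\diamond g$ extends to a bounded operator from $L^p_\F(\Omega;\tbp)$ into $L^p_\F(\Omega;\tbp(\R^n))$, of norm $K_{p,\beta}<\infty$ (this is the constant appearing in the statement). Next I would check that $\Phi$ maps $L^p_\F(\Omega;\tbp(\R^n))$ into itself. The deterministic term $\nabla S(\cdot)u_0$ lies in $\tbp(\R^n)$, hence (being non-random) in $L^p_\F(\Omega;\tbp(\R^n))$, by Lemma \ref{lem:u0}, since $u_0\in\Dom_p(A^{\beta/2})$ and $(\tfrac1p,\beta)$ is an interior point of the polytope. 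For the stochastic term one uses that the Nemytskii operator $B$ maps $L^p_\F(\Omega;\tbp(\R^n))$ into $L^p_\F(\Omega;\tbp)$: it acts pointwise in $(t,x,\omega)$, hence preserves joint measurability and adaptedness, it is bounded on the tent norm by \eqref{eq:ass-b}, and it is Lipschitz with constant $L_b$ by the Lipschitz continuity of $b$; therefore $\nabla S\diamond B(V)$ is well defined and belongs to $L^p_\F(\Omega;\tbp(\R^n))$.

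The contraction step is then immediate: for $V_1,V_2\in L^p_\F(\Omega;\tbp(\R^n))$ the deterministic terms cancel and, by linearity of the stochastic convolution, $\Phi(V_1)-\Phi(V_2)=\nabla S\diamond(B(V_1)-B(V_2))$, so that
$$\|\Phi(V_1)-\Phi(V_2)\|_{L^p(\Omega;\tbp(\R^n))} \le K_{p,\beta}\|B(V_1)-B(V_2)\|_{L^p(\Omega;\tbp)} \le K_{p,\beta}L_b\|V_1-V_2\|_{L^p(\Omega;\tbp(\R^n))}.$$
Since $K_{p,\beta}L_b<1$ by hypothesis, $\Phi$ is a strict contraction on the Banach space $L^p_\F(\Omega;\tbp(\R^n))$ and hence has a unique fixed point $V$, which is exactly the desired solution of \eqref{eq:fp}. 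I would also note that any element of $L^p(\Omega;\tbp(\R^n))$ satisfying \eqref{eq:fp} is necessarily $\F$-adapted, since otherwise the stochastic convolution on the right-hand side would be undefined; hence uniqueness holds in all of $L^p(\Omega;\tbp(\R^n))$.

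I expect the only genuinely non-routine point to be the extension of Theorem \ref{thm:main} from adapted simple processes to an arbitrary $g\in L^p_\F(\Omega;\tbpH)$: one has to verify density of the simple processes in that space and then use the a priori estimate to see that $(\nabla S\diamond g_n)_n$ is Cauchy in $L^p(\Omega;\tbp(\R^n))$ for any approximating sequence, which both defines $\nabla S\diamond g$ and guarantees $K_{p,\beta}<\infty$, thereby making the smallness hypothesis $K_{p,\beta}L_b<1$ meaningful. The measurability and Lipschitz bookkeeping for $B$, and the adaptedness of the stochastic convolution, are standard.
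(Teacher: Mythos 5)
Your proposal is correct and follows essentially the same route as the paper: a Banach fixed point argument for $F(v)=\nabla S(\cdot)u_0+\nabla S\diamond B(v)$ on $L^p_\F(\Omega;\tbp(\R^n))$, with Lemma \ref{lem:u0} handling the deterministic term, Theorem \ref{thm:abstract-q}/Theorem \ref{thm:main} (extended by density from simple processes) handling the stochastic convolution, and the contraction constant $K_{p,\beta}L_b<1$. The extra bookkeeping you flag (density of simple processes, adaptedness of $B(V)$) is exactly what the paper treats implicitly, so no substantive difference remains.
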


\begin{proof}
Consider the fixed point mapping  
$F$ on $ L_\F^p(\Omega; \tbp(\R^n)  )$ defined by
$$ F(v):= \nabla S(\cdot)u_0 +  \nabla S \diamond B(v).$$
By Theorem \ref{thm:abstract-q}, we have  $\nabla S \diamond B(v) \in
 L_\F^p(\Omega;\tbp(\R^n)  )$ for 
all $v\in \tbp(\R^n)  $.
This, in combination with the previous lemma, 
shows that $F$ maps $L_\F^p(\Omega;\tbp(\R^n)  )$ into itself. 

For $v_1,v_2\in L_\F^p(\Omega;\tbp(\R^n)  )$ we may estimate
\begin{align*}
\|F(v_1)-F(v_2)\|_{L^p(\Omega;\tbp(\R^n)  )} & =  \|\nabla S\diamond (B(v_1) -
B(v_2))\|_{L^p(\Omega;\tbp(\R^n)  )}
\\ & \leq K_{p,\beta} \|B(v_1) - B(v_2)\|_{L^p(\Omega;\tbp(\R^n)  )}
\\ & \leq K_{p,\beta} L_{b} \|v_1 -v_2\|_{L^p(\Omega;\tbp(\R^n)  )}. 
\end{align*}
Since by assumption $K_{p,\beta} L_{b} <1$,  $F$
has a unique fixed point $V$ in $L_\F^p(\Omega;\tbp(\R^n) )$ 
and the theorem is proved.
\end{proof}

\providecommand{\href}[2]{#2}



\end{document}